\title{Monomial invariants applied to graph coloring}
\author{Guillermo Alesandroni}
\address{2000 Rosario, Santa Fe, Argentina}
\email{guillea@okstate.edu, alesandronig@yahoo.com}
\newtheorem{theorem}{Theorem}[section]
\newtheorem{proposition}[theorem]{Proposition}
\newtheorem{corollary}[theorem]{Corollary}
\newtheorem{lemma}[theorem]{Lemma}
\newtheorem{conjecture}[theorem]{Conjecture}
\theoremstyle{definition}
\newtheorem{definition}[theorem]{Definition}
\newtheorem{example}[theorem]{Example}
\DeclareMathOperator{\pd}{pd}
\DeclareMathOperator{\lcm}{lcm}
\DeclareMathOperator{\pol}{pol}
\DeclareMathOperator{\codim}{codim}
\DeclareMathOperator{\e}{e}
\begin{document}
\maketitle
\begin{abstract}
This article is built upon three main ideas. First, for a class of monomial ideals, it is proven that the multiplicity of an ideal equals the number of realizations of its codimension (an intuitive concept that we define later). Next, for an arbitrary graph $G$, we construct a monomial ideal $M_G$, and show that the chromatic number of $G$ is equal to the codimension of $M_G$. Finally, for a class of graphs, we give a formula that computes the chromatic polynomial of $G$, evaluated at the chromatic number of $G$, in terms of the codimension and multiplicity of $M_G$. In particular, the formula applies to all graphs satisfying the Erdős-Faber-Lovász conjecture.
\end{abstract}
\section{Introduction}
The flow of this work is a threefold process that aims at computing graph invariants, using monomial invariants. Although the ultimate goal is to put the theory of monomial ideals to the service of graph coloring, each step of the process has its own goal, and contains some result interesting in itself. 

In the first stage, we reinterpret two monomial invariants in the language of monomial ideals. Since monomial ideals are members of the larger class of modules over a polynomial ring, definitions and theorems about such modules automatically become definitions and theorems about monomial ideals. In particular, the concepts of monomial codimension and monomial multiplicity can be (and have often been) defined with the same terminology as the codimension and multiplicity of a module. However, in this article we need to redefine these concepts in terms inherent to monomial ideals. 

More specifically, we define the codimension of a monomial ideal $M$ as the cardinality of the smallest set of variables $X$, such that every generator of $M$ is divisible by some variable of $X$. The set $X$ is called a realization of the codimension of $M$. Then, we prove that for a large class of monomial ideals, the multiplicity of an ideal equals the number of realizations of the codimension of its polarization (Theorem \ref{realizations}). 

The second part of this work is constructive and pivotal. Given a graph $G$, a monomial ideal $M_G$ is constructed, and we call it the chromatic ideal of $G$. The name chromatic ideal responds to the fact that  the chromatic number of $G$ equals the codimension of $M_G$ (Theorem \ref{Theorem 3}). This chromatic ideal $M_G$ can be regarded as the algebraic counterpart of the graph $G$, as invariants of one can often be represented in terms of invariants of the other, and each structure can be obtained from the other (Proposition \ref{Prop}).

The idea of associating a monomial ideal to a graph has already been used in the past. The term edge ideal, for instance, clearly speaks of an ideal defined in terms of the edges of a graph. An interesting application of the interplay between monomial ideals and graphs can be found in [FHVT], where the authors use cover ideals to compute chromatic numbers. Edge ideals, cover ideals, and other ideals constructed with an eye on graphs, usually have the following common characteristic: the vertices of the graph determine the variables of the ideal, and the edges of the graph determine (either directly or indirectly) the minimal generators of the ideal. Chromatic ideals, however, reverse this principle. The vertices of $G$ define the minimal generators of $M_G$, and the edges of $G$ determine the variables of $M_G$ (Definition \ref{DefChrom}). This distictive feature makes chromatic ideals a useful bridge between commutative algebra and graph theory.

In the third and last part of the process, we transform information about monomial ideals into information about graphs. Some graph invariants are expressed in terms of monomial invariants and, in our main result (Theorem \ref{Theorem 5.6}), we show how the chromatic polynomial of $G$, evaluated at the chromatic number of $G$, can be computed using the codimension and multiplicity of $M_G$, for a class of graphs. The key to prove this theorem, is the fact that the number of configurations of $k$-colorings of $G$ (where $k$ is the chromatic number of $G$) is equal to the number of realizations of the codimension of $M_G$. In particular, the theorem applies to all graphs satisfying the Erdős-Faber-Lovász conjecture, something that we discuss at the end of this paper.

The material is organized as follows. Section 2 concerns background and notation. Section 3 discusses the meanings of codimension and multiplicity in the context of monomial ideals. In Section 4, we introduce chromatic ideals, compute some of their invariants, and prove their basic properties. Section 5 displays the interaction between chromatic polynomial, chromatic number, codimension, and multiplcity. Section 6 deals with the Erdős-Faber-Lovász conjecture. Finally, in Section 7, we make some remarks and ask some questions.

\section{Background and notation}
Since this article relies more strongly on commutative algebra than it does on graph theory, we will presuppose some familiarity with the former. In particular, we will take for granted that the reader is acquainted with dimension theory and free resolutions, and knows some properties of monomial ideals. On the other hand, we will not assume previous knowledge on graph theory. In fact, we will introduce the fundamental objects, and will define any terms as needed. For related work on commutative algebra and graph theory, see [Pe] and [VT], respectively.

Throughout, the letter $S$ denotes a polynomial ring over a field, in sufficiently many variables. The letter $M$ always represents a monomial ideal in $S$, and the standard notation $G$ is reserved for a finite simple undirected graph (a concept that we define below). All graphs studied in this article are finite, simple, and undirected. For this reason, we will omit these adjectives and will say that $G$ is a graph, without ambiguity.

\begin{definition}
A \textbf{graph $G$} is a pair $G=(V,E)$, where $V$ is a finite set of elements called \textbf{vertices}, and $E$ is a family of $2$-element subsets of $V$, called \textbf{edges}. Unless otherwise stated, the vertices will be denoted with numbers. Thus, a graph will usually be represented in the form $G=(V,E)$, where $V=\{1,\ldots,n\}$, and $E\subseteq\left\{\{i,j\}: (i,j)\in V^2, i\ne j \right\}$.

If $G=(V,E)$ and $G'=(V',E')$ are two graphs, such that $V' \subseteq V$ and $E' \subseteq E$, we say that $G'$ is a \textbf{subgraph} of $G$. In particular, if every edge $\{i,j\}$ of $E$ with $i,j \in V'$, is also an edge of $E'$, then $G'$ is called the \textbf{induced subgraph} of $G$ on $V'$.
\end{definition}

Below, we define some special classes of graphs.

\begin{definition}
Let $G=(V,E)$ be a graph.
\begin{enumerate}[(i)]
\item If $V=\{1,\ldots,n\}$ and $E=\left\{\{i,j\}: (i,j)\in V^2, i\ne j \right\}$, $G$ is called an \textbf{$n$-clique}.
\item If $V=\{1,\ldots,n\}$ and $E=\left\{\{1,2\},\{2,3\},\ldots,\{n-1,n\},\{n,1\}\right\}$, $G$ is called an \textbf{$n$-cycle}.
\item If $E=\varnothing$, $G$ is called an \textbf{edgeless graph}.
\end{enumerate}
\end{definition}

Graphs are usually represented graphically by identifying each vertex with a node, and each edge $\{i,j\}$ with a segment of line joining the nodes identified with the vertices $i$ and $j$. The following graphs illustrate the definition above.

\begin{example}
\[
\begin{array}{ccccc}
\begin{tikzpicture}
\foreach \a in {5}{
\node[regular polygon, regular polygon sides=\a, minimum size=2.5cm, draw]  (A) {};
\foreach \i in {3}
    \node[circle, label=below:{1}, fill=black, inner sep=1pt] at (A.corner \i) {};
    \foreach \i in {2}
    \node[circle, label=above:{2}, fill=black, inner sep=1pt] at (A.corner \i) {};
    \foreach \i in {1}
    \node[circle, label=above:{3}, fill=black, inner sep=1pt] at (A.corner \i) {};
    \foreach \i in {5}
    \node[circle, label=above:{4}, fill=black, inner sep=1pt] at (A.corner \i) {};
    \foreach \i in {4}
    \node[circle, label=below:{5}, fill=black, inner sep=1pt] at (A.corner \i) {};
    \draw (A.corner 1)--(A.corner 3);
    \draw (A.corner 2)--(A.corner 5);
    \draw (A.corner 2)--(A.corner 4);
    \draw (A.corner 1)--(A.corner 4);
    \draw (A.corner 3)--(A.corner 5);
    }
    
  \end{tikzpicture}
&&\begin{tikzpicture}
\foreach \a in {6}{
\node[regular polygon, regular polygon sides=\a, minimum size=2.5cm, draw]  (A) {};
\foreach \i in {4}
    \node[circle, label=below:{1}, fill=black, inner sep=1pt] at (A.corner \i) {};
    \foreach \i in {3}
    \node[circle, label=above:{2}, fill=black, inner sep=1pt] at (A.corner \i) {};
    \foreach \i in {2}
    \node[circle, label=above:{3}, fill=black, inner sep=1pt] at (A.corner \i) {};
    \foreach \i in {1}
    \node[circle, label=above:{4}, fill=black, inner sep=1pt] at (A.corner \i) {};
    \foreach \i in {6}
    \node[circle, label=above:{5}, fill=black, inner sep=1pt] at (A.corner \i) {};
     \foreach \i in {5}
    \node[circle, label=below:{6}, fill=black, inner sep=1pt] at (A.corner \i) {};
}
\end{tikzpicture}&&
\begin{tikzpicture}
\node[draw=none,minimum size=2cm,regular polygon,regular polygon sides=4] (a) {};

\foreach \x in {1,2,...,4}
  \fill (a.corner \x) circle[radius=1pt];
  \node[circle, label=below:{1}, fill=black, inner sep=1pt] at (a.corner 3){};
  \node[circle, label=above:{3}, fill=black, inner sep=1pt] at (a.corner 1){};
  \node[circle, label=above:{2}, fill=black, inner sep=1pt] at (a.corner 2){};
  \node[circle, label=below:{4}, fill=black, inner sep=1pt] at (a.corner 4){};
\end{tikzpicture}\\
5-\text{clique}& &6-\text{cycle}&&\text{edgeless graph} 
\end{array}\]
\end{example}

The following are some common terms derived from the visual representation of a graph. Two vertices connected by an edge are said to be \textbf{adjacent}, and are called \textbf{endpoints} of such edge. Two edges with a common vertex are called \textbf{incident}. In addition, if a vertex is one of the endpoints of an edge, the edge and the vertex are said to be \textbf{incident}.

\begin{definition}
Given a graph $G=(V,E)$, and a set of vertices $\omega\subseteq V$, we say that $\omega$ is an \textbf{independent set} if no edge has both endpoints in $\omega$. The set $\omega$ is called a \textbf{maximal independent set} if it is maximal under inclusion. 
\end{definition}

We close this section with one final important definition.

\begin{definition}
A \textbf{$k$-coloring} of a graph $G = (V,E)$ is a function $f:V\rightarrow \{0,1,\ldots,k-1\}$, such that $f(i)\ne f(j)$ whenever $i$ is adjacent to $j$. For a given value of $k$, we say that $G$ is \textbf{$k$-colorable}, if a $k$-coloring of $G$ exits. The \textbf{chromatic number} of $G$, denoted $\chi(G)$, is the smallest value of $k$, such that $G$ is $k$-colorable. 

For each value of $k$, the number of $k$-colorings of $G$ will be denoted $P_G(k)$. There is a unique polynomial $P_G(t)$, which evaluated at any integer $k \geq 0$ coincides with $P_G(k)$, and is called the \textbf{chromatic polynomial} of $G$.
\end{definition}

\section{A reinterpretation of monomial invariants}

Consider a polynomial ring $S$ on $n$ variables $x_1,\ldots,x_n$. The codimension of $S/M$, denoted $\codim(S/M)$, is commonly defined as follows [Ei]. If $M$ is prime, $\codim(S/M)$ is the supremum of lengths of chains of prime ideals, descending from $M$. If $M$ is not prime, $\codim(S/M)$ is the minimum of the codimensions $\codim(S/P)$, where $P$ is a prime ideal containing $M$.

Since every prime ideal $P$ of $S$ is of the form $P=(x_{i_1},\ldots,x_{i_k})$, it follows that the supremum of lengths of chains of prime ideals descending from $P$ is $k$, the cardinality of the minimal generating set of $P$ (note that $k$ is the length of the chain 
$(0)\subseteq(x_{i_1})\subseteq (x_{i_1},x_{i_2})\subseteq \ldots\subseteq (x_{i_1},\ldots,x_{i_k}) = P$).
Thus, $\codim(S/P) = k$, the number of minimal generators of $P$.

\begin{proposition}\label{Proposition 1}
Let $M$ and $P$ be monomial ideals of $S$. Then $P$ is a prime ideal containing $M$ if and only if there is a subset $\{x_{i_1},\ldots,x_{i_k}\}$ of $\{x_1,\ldots,x_n\}$, such that $P=(x_{i_1},\ldots,x_{i_k})$, and each minimal generator of $M$ is divisible by at least one element of $\{x_{i_1},\ldots,x_{i_k}\}$.
\end{proposition}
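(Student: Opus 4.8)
The plan is to prove the two implications separately, using the well-known structure theory of monomial prime ideals.

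\medskip

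\noindent\emph{($\Leftarrow$)} Suppose $P = (x_{i_1}, \ldots, x_{i_k})$ with each minimal generator of $M$ divisible by some $x_{i_j}$. First I would recall that an ideal generated by a subset of the variables is prime: the quotient $S/(x_{i_1},\ldots,x_{i_k})$ is again a polynomial ring over the field (in the remaining variables), hence an integral domain. So $P$ is prime. To see that $M \subseteq P$, note that $M$ is generated by its minimal monomial generators, so it suffices to check each minimal generator $m$ lies in $P$; but by hypothesis $x_{i_j} \mid m$ for some $j$, so $m \in (x_{i_j}) \subseteq P$. Since a monomial ideal is contained in $P$ as soon as all its generators are, we get $M \subseteq P$.

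\medskip

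\noindent\emph{($\Rightarrow$)} Suppose $P$ is a monomial prime ideal containing $M$. The main point is that any monomial prime ideal of $S$ is generated by a subset of the variables. I would argue this as follows: let $P$ be a monomial prime, and let $m = x_1^{a_1} \cdots x_n^{a_n}$ be one of its minimal generators. If some exponent $a_\ell \geq 2$, or if $m$ involves at least two distinct variables, then we can write $m = m_1 m_2$ as a product of two proper monomial divisors of $m$, each of strictly smaller degree; since $m_1 m_2 = m \in P$ and $P$ is prime, one of $m_1, m_2 \in P$, contradicting the minimality of $m$ among the monomial generators of $P$. Hence every minimal generator of $P$ is a single variable, i.e.\ $P = (x_{i_1}, \ldots, x_{i_k})$ for some subset of the variables. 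Finally, since $M \subseteq P$, each minimal generator $m$ of $M$ lies in $P$, and because $P$ is a monomial ideal generated by variables, $m \in P$ forces some $x_{i_j} \mid m$. (Concretely: $m$ lies in the monomial ideal $(x_{i_1},\ldots,x_{i_k})$ exactly when $m$ is a multiple of one of the $x_{i_j}$.)

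\medskip

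The only slightly delicate point — and the one I would state most carefully — is the claim that a monomial prime ideal is generated by variables, together with the elementary fact that a monomial $m$ belongs to the monomial ideal $(x_{i_1},\ldots,x_{i_k})$ if and only if $m$ is divisible by some $x_{i_j}$; both are standard but deserve an explicit sentence since everything rests on them. The rest is routine: the equivalence ``$M \subseteq P$ iff every minimal generator of $M$ lies in $P$'' is immediate from $M$ being generated by those elements.
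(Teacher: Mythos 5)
Your proof is correct and follows essentially the same route as the paper: both directions reduce to the fact that a monomial prime is generated by a subset of the variables and that a monomial lies in such an ideal exactly when some generating variable divides it. The only difference is that you supply a short proof of the structural fact about monomial primes (via factoring a minimal generator of degree at least two), which the paper simply cites as known.
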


\begin{proof}
$(\Rightarrow)$ Since every prime ideal of $S$ is minimally generated by a subset of $\{x_1,\ldots,x_n\}$, $P$ must be of the form $P=(x_{i_1},\ldots,x_{i_k})$. In addition, if $M\subseteq P$, each minimal generator of $M$ must be a multiple of a minimal generator of $P$. Therefore, each minimal generator of $M$ is divisible by at least one element of $\{x_{i_1},\ldots,x_{i_k}\}$.\\
$(\Leftarrow)$ If $P = (x_{i_1},\ldots,x_{i_k})$, then $P$ is prime. Since each minimal generator $m$ of $M$ is divisible by one of $x_{i_1},\ldots,x_{i_k}$, each $m$ must be in $P$. Hence, $M$ itself is contained in $P$.
\end{proof}

It follows from Proposition \ref{Proposition 1} that the minimum of the codimensions $\codim(S/P)$, where $P$ is prime and $P\supseteq M$, is the minimum of the cardinalities of subsets $\{x_{i_1},\ldots,x_{i_k}\}$ of $\{x_1,\ldots,x_n\}$, such that each minimal generator of $M$ is divisible by at least one of $x_{i_1},\ldots,x_{i_k}$. Now we can redefine the concept of codimension.

\begin{definition}\label{Definition codimension}
We define the \textbf{codimension} of $S/M$, denoted $\codim(S/M)$, as the minimum of the cardinalities of the subsets $X$ of $\{x_1,\ldots,x_n\}$, such that every minimal generator of $M$ is divisible by at least one element of $X$.
\end{definition}

It is convenient to point out that Proposition \ref{Proposition 1}, as well as the alternative Definition \ref{Definition codimension}, have long been known to algebraists. However, rarely do these ideas appear in the literature, and their value has perhaps gone unnoticed.

\begin{definition}
Let $M$ be squarefree. Denote $\mathscr{R}(M) =\{\{x_{i_1},\ldots,x_{i_r}\}\subseteq\{x_1,\ldots,x_n\}: r = \codim(S/M)$, and every minimal generator of $M$ is divisible by at least one of $x_{i_1},\ldots,x_{i_r}\}$.

Each set in the class $\mathscr{R}(M)$ will be called a \textbf{realization of the codimension} of $S/M$, or simply a realization of $\codim(S/M)$. 
\end{definition}

\begin{lemma}\label{Lemma 0}
Suppose that $M = (m_1,\ldots,m_s,h_1,\ldots,h_r)$ is a squarefree monomial ideal such that $r,s\geq 1$; $(h_1,\ldots,h_r)$ is a complete intersection, and $\codim(S/M) = r$. Consider the ideal 
$M_1 = (m_2,\ldots,m_s,h_1,\ldots,h_r)$, and let $M_{m_1} =(m'_2,\ldots,m'_s,h'_1,\ldots,h'_r)$, where $m'_i = \dfrac{\lcm(m_i,m_1)}{m_1}$ and $h'_i = \dfrac{\lcm(h_i,m_1)}{m_1}$. Then, 
\[\codim(S/M) = \codim(S/M_1)\leq \codim(S/M_{m_1}).\]
\end{lemma}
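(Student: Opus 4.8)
The plan is to unwind the definition of codimension as a covering number and exploit the disjointness of the supports of the $h_i$. For a squarefree monomial $m$, write $\operatorname{supp}(m)$ for the set of variables dividing $m$. By Definition \ref{Definition codimension}, $\codim(S/N)$ is the least cardinality of a set $X$ of variables meeting $\operatorname{supp}(g)$ for every minimal generator $g$ of $N$; call such an $X$ a \emph{cover} of $N$. I will take the displayed generators of $M$ to be its minimal generators (otherwise discard the redundant ones, which changes nothing); in particular $m_1$ is a minimal generator, so no $h_i$ and no $m_j$ with $j\ge 2$ divides $m_1$. Since $(h_1,\ldots,h_r)$ is a complete intersection of squarefree monomials, the $h_i$ form a regular sequence of monomials, which forces $\operatorname{supp}(h_1),\ldots,\operatorname{supp}(h_r)$ to be pairwise disjoint and nonempty.

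First I would prove $\codim(S/M_1)=\codim(S/M)$. Every minimal generator of $M_1$ is a minimal generator of $M$, since deleting $m_1$ from a minimal generating set leaves the remaining generators minimal; hence any cover of $M$ is a cover of $M_1$, and $\codim(S/M_1)\le\codim(S/M)=r$. Conversely, any cover of $M_1$ must meet each of the $r$ pairwise disjoint nonempty sets $\operatorname{supp}(h_1),\ldots,\operatorname{supp}(h_r)$, so it has at least $r$ elements; thus $\codim(S/M_1)\ge r=\codim(S/M)$, and equality holds.

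Next, for $\codim(S/M)\le\codim(S/M_{m_1})$, the key observation is that passing from $h_i$ to $h'_i=\lcm(h_i,m_1)/m_1$ merely removes from $\operatorname{supp}(h_i)$ the variables already dividing $m_1$; that is, $\operatorname{supp}(h'_i)=\operatorname{supp}(h_i)\setminus\operatorname{supp}(m_1)$. Because $m_1$ is a minimal generator of $M$ we have $h_i\nmid m_1$, so this set is nonempty, i.e.\ $h'_i\ne 1$; and since the $\operatorname{supp}(h_i)$ are pairwise disjoint, so are the $\operatorname{supp}(h'_i)$. (Likewise $m'_j\ne1$, so $M_{m_1}\ne S$.) Now $h'_1,\ldots,h'_r\in M_{m_1}$, and each $h'_i$ is a multiple of some minimal generator of $M_{m_1}$ whose support is contained in $\operatorname{supp}(h'_i)$; hence every cover of $M_{m_1}$ meets each of the $r$ pairwise disjoint nonempty sets $\operatorname{supp}(h'_i)$ and so has at least $r$ elements. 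Therefore $\codim(S/M_{m_1})\ge r=\codim(S/M)$, which closes the chain.

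I do not expect a serious obstacle; the argument is essentially bookkeeping with supports. The two points that need care are the translation of the complete-intersection hypothesis into the statement that the $h_i$ have pairwise disjoint supports (for monomials, a regular sequence is exactly a family with pairwise disjoint supports), and the verification that no $h'_i$ collapses to $1$ — both of which follow from the minimality of $m_1$ as a generator of $M$. It is worth noting that only the modified pieces $h'_i$, and not the $m'_j$, are used in the lower bound for $\codim(S/M_{m_1})$, which is consistent with the inequality in the statement being allowed to be strict.
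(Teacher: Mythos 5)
Your proof is correct, and its overall strategy --- reading $\codim$ as a minimum transversal of the supports of the minimal generators --- is the same as the paper's; the first equality is proved in essentially the identical way (the paper phrases it as $(h_1,\ldots,h_r)\subseteq M_1\subseteq M$ together with $\codim(S/(h_1,\ldots,h_r))=r$, which is your disjoint-supports count in different clothing). Where you genuinely diverge is the inequality $\codim(S/M)\le\codim(S/M_{m_1})$: the paper takes a realization $X$ of $\codim(S/M_{m_1})$ and pulls it back to a cover of $M_1$ using the divisibility $l'=\lcm(l,m_1)/m_1\mid l$, then invokes the already-established $\codim(S/M_1)=r$; you instead bound any cover of $M_{m_1}$ from below directly, by observing that the $h'_i$ still have pairwise disjoint nonempty supports. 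Both arguments are equally elementary. Yours has the small advantage of making explicit that no $h'_i$ (or $m'_j$) collapses to $1$ --- a degenerate case ($M_{m_1}=S$) that the paper's proof silently skips --- while the paper's version does not need the ``complete intersection $=$ pairwise coprime generators'' translation a second time. Your standing assumption that the displayed generators are minimal (so that $h_i\nmid m_1$) is the right reading: it is exactly the situation in which the lemma is applied in Theorem \ref{realizations}, and without some such hypothesis the nonemptiness of $\operatorname{supp}(h'_i)$ could fail.
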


\begin{proof}
Since $(h_1,\ldots,h_r)\subseteq M_1\subseteq M$, we must have 
\[r = \codim\left(\dfrac{S}{(h_1,\ldots,h_r)}\right)\leq \codim\left(S/M_1\right)\leq \codim\left(S/M\right) = r.\] Thus, 
$\codim(S/M_1) = r$.

 Now, suppose that $X\subseteq \{x_1,\ldots,x_n\}$ is a realization of the codimension of $S/M_{m_1}$. Let $l\in \{m_2,\ldots,m_s,h_1,\ldots,h_r\}$, and let $l' = \dfrac{\lcm(l,m_1)}{m_1}$. Then, there is a variable $x\in X$, such that $x\mid l'$. Since $l' = \dfrac{\lcm(l,m_1)}{m_1}\mid l$, it follows that $x\mid l$. Hence, every minimal generator of $M_1$ is divisible by some element of $X$, and $r=\codim(S/M_1)\leq \left|X\right| = \codim(S/M_{m_1})$.
\end{proof}

The next theorem expresses the multiplicities of certain monomial ideals as the number of realizations of the codimensions of their polarizations. The polarization of $M$ is denoted $M_{\pol}$. 

\begin{theorem} \label{realizations}
Suppose that $M = (m_1,\ldots,m_s,h_1,\ldots,h_r)$, where $(h_1,\ldots,h_r)$ is a complete intersection, and $\codim(S/M) = r$. Then $\e(S/M) = \left|\mathscr{R}(M_{\pol})\right|$.
\end{theorem}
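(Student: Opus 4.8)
The plan is to reduce to the squarefree case and then induct on $s$, using Lemma \ref{Lemma 0} to control each step.

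First I would invoke the fact that polarization preserves both codimension and multiplicity: $\codim(S/M)=\codim(S'/M_{\pol})$ and $\e(S/M)=\e(S'/M_{\pol})$, where $S'$ is the (larger) polynomial ring in which $M_{\pol}$ lives. Writing $N:=M_{\pol}$, one checks that $N$ inherits the hypotheses of the theorem: polarizing a monomial complete intersection $(h_1,\dots,h_r)$ — which, for monomials, means the $h_i$ have pairwise disjoint supports — produces squarefree monomials with pairwise disjoint supports in $S'$, hence again a complete intersection, and $\codim(S'/N)=r$. Since $N$ is squarefree, $N_{\pol}=N$, so $\mathscr R(M_{\pol})=\mathscr R(N)$, and the theorem reduces to the statement: for a squarefree $M=(m_1,\dots,m_s,h_1,\dots,h_r)$ with $(h_1,\dots,h_r)$ a complete intersection and $\codim(S/M)=r$, one has $\e(S/M)=|\mathscr R(M)|$.

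For the reduced statement I would induct on $s$. When $s=0$, $M=(h_1,\dots,h_r)$ is a complete intersection, so $\e(S/M)=\prod_{i=1}^r\deg(h_i)$; on the other side, a subset of size $r$ meeting every $h_i$ must contain exactly one variable from each of the $r$ pairwise disjoint supports, so $|\mathscr R(M)|=\prod_{i=1}^r\deg(h_i)$ as well. For $s\ge1$, set $M_1=(m_2,\dots,m_s,h_1,\dots,h_r)$ and $M_{m_1}=(M_1:m_1)$; this $M_{m_1}$ is exactly the ideal $(m'_2,\dots,m'_s,h'_1,\dots,h'_r)$ of Lemma \ref{Lemma 0}, since $\lcm(l,m_1)/m_1=l/\gcd(l,m_1)$. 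The short exact sequence
\[
0\longrightarrow S/M_{m_1}\xrightarrow{\ \cdot m_1\ } S/M_1\longrightarrow S/M\longrightarrow 0
\]
together with Lemma \ref{Lemma 0} (which gives $\codim(S/M_1)=r\le\codim(S/M_{m_1})$) and the additivity of multiplicity along exact sequences yields $\e(S/M)=\e(S/M_1)$ when $\codim(S/M_{m_1})>r$, and $\e(S/M)=\e(S/M_1)-\e(S/M_{m_1})$ when $\codim(S/M_{m_1})=r$. Since $M_1$, and (in the second case) $M_{m_1}$, satisfy the hypotheses of the reduced statement with strictly fewer $m$-generators — the $h'_i$ still have pairwise disjoint supports, and none equals $1$ precisely because $\codim(S/M_{m_1})$ is finite in that case — the induction hypothesis applies to both.

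It then remains to prove the matching combinatorial identity $|\mathscr R(M_1)|=|\mathscr R(M)|$, respectively $|\mathscr R(M_1)|=|\mathscr R(M)|+|\mathscr R(M_{m_1})|$, after which the multiplicity recursion and the induction hypothesis finish the proof. I would establish it by partitioning $\mathscr R(M_1)$ according to whether a realization $X$ meets $\operatorname{supp}(m_1)$. If it does, then $X$ meets every generator of $M$ and has size $r=\codim(S/M)$, so $X\in\mathscr R(M)$; conversely every element of $\mathscr R(M)$ lies in $\mathscr R(M_1)$ and meets $\operatorname{supp}(m_1)$. If it does not, then $X$ meets every $l/\gcd(l,m_1)$, i.e.\ every generator of $M_{m_1}$, and — using that a realization contains no variable useless to all generators — this identifies the remaining realizations of $M_1$ with $\mathscr R(M_{m_1})$ precisely when $\codim(S/M_{m_1})=r$, and shows there are none otherwise. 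I expect the main obstacle to be exactly this bookkeeping: aligning the cases of the multiplicity recursion with the partition of $\mathscr R(M_1)$, checking cleanly that every element of $\mathscr R(M_{m_1})$ automatically avoids $\operatorname{supp}(m_1)$ so that the correspondence is an honest equality of sets, and confirming that the degenerate situations — some $h'_i=1$, equivalently $m_1\in M_1$ — fall into the "$\codim(S/M_{m_1})>r$" case.
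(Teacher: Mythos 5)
Your proposal is correct and follows essentially the same route as the paper: reduce to the squarefree case by polarization, induct on $s$ with the complete intersection as base case, and match the multiplicity recursion $\e(S/M)=\e(S/M_1)$ or $\e(S/M_1)-\e(S/M_{m_1})$ (which the paper cites from [Al1, Lemma 4.1] and you rederive from the short exact sequence) against the corresponding identity for realizations. The one step you flag as a possible obstacle --- that every $X\in\mathscr R(M_{m_1})$ avoids the support of $m_1$ --- is precisely where the paper uses the complete-intersection hypothesis, and it does go through: since the $h'_i$ have pairwise disjoint supports and $|X|=r$, each variable of $X$ lies in some $\operatorname{supp}(h'_i)=\operatorname{supp}(h_i)\setminus\operatorname{supp}(m_1)$.
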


\begin{proof}
Let us first assume that $M$ is squarefree. The proof is by induction on $s$. 

If $s = 0$, then $M= (h_1,\ldots,h_r)$ is a complete intersection. In this case, $X$ is a realization of $\codim(S/M)$ if and only if $X$ is of the form $X=\{x_{i_1},\ldots,x_{i_r}\}$, where $x_{i_j}\mid h_j$. Since each $h_j$ is squarefree, the number of variables dividing $h_j$ equals $\deg(h_j)$. Therefore, the number of realizations of $\codim(S/M)$ is $\left|\mathscr{R}(M)\right|= \prod\limits_{j=1}^r \deg(h_j)=\e(S/M)$.

 Suppose that the theorem holds for $s-1$. We will prove it for $s$.\\
According to Lemma \ref{Lemma 0}, $\codim(S/M) = \codim(S/M_1) \leq \codim(S/M_{m_1})$. Let us consider the case $\codim(S/M) = \codim(S/M_1)<\codim(S/M_{m_1})$. By [Al1, Lemma 4.1(ii)], 
$\e(S/M) = \e(S/M_1)$. By induction Hypothesis, $\e(S/M_1) = \left|\mathscr{R}(M_1)\right|$. Thus, we need to prove that $\left|\mathscr{R}(M_1)\right| = \left|\mathscr{R}(M)\right|$. Suppose that $X$ is a realization of $\codim(S/M)$. Since $\codim(S/M_1) = \codim(S/M)$ and $M_1\subseteq M$, $X$ must be a realization of $\codim(S/M_1)$. Hence, $\mathscr{R}(M)\subseteq\mathscr{R}(M_1)$. Now, let $X$ be a realization of $\codim(S/M_1)$. Suppose, by means of contradiction, that $X\in \mathscr{R}(M_1)\setminus\mathscr{R}(M)$. Then, each of $m_2,\ldots,m_s,h_1,\ldots,h_r$ is divisible by some element of $X$, but $m_1$ is not divisible by any element of $X$. Let 
$l'\in\{m'_2,\ldots,m'_s,h'_1,\ldots,h'_r\}$. Then $l' = \dfrac{\lcm(l,m_1)}{m_1}$, where $l\in\{m_2,\ldots,m_s,h_1,\ldots,h_r\}$. Let $x\in X$ be such that $x\mid l$. Then $x\mid\lcm(l,m_1) = l' m_1$. Since $x\nmid m_1$, $x\mid l'$. We have proven that every minimal generator of $M_{m_1}$ is divisible by some element of $X$. Hence, $\codim(S/M_{m_1})\leq \left|X\right| = \codim(S/M_1)$, a contradiction. Therefore, $\mathscr{R}(M) = \mathscr{R}(M_1)$. Combining these facts, we obtain $\e(S/M) = \e(S/M_1) = \left|\mathscr{R}(M_1)\right| = \left|\mathscr{R}(M)\right|$. Now, let us consider the case $\codim(S/M) = \codim(S/M_1) = \codim(S/M_{m_1})$. By [Al1, Lemma 4.1(i)], $\e(S/M) = \e(S/M_1) - \e(S/M_{m_1})$. By induction hypothesis, $\e(S/M_1) = \left|\mathscr{R}(M_1)\right|$, and $\e(S/M_{m_1}) = \left|\mathscr{R}(M_{m_1})\right|$. Therefore $\e(S/M) = \left|\mathscr{R}(M_1)\right| - \left|\mathscr{R}(M_{m_1})\right|$. We will prove that $\left|\mathscr{R}(M)\right| = \left|\mathscr{R}(M_1)\right| - \left|\mathscr{R}(M_{m_1})\right|$. Let $X\in \mathscr{R}(M_{m_1})$. Then each of $m'_2,\ldots,m'_s,h'_1,\ldots,h'_r$ is divisible by some element of $X$, and thus, each of $m_2,\ldots,m_s,h_1,\ldots,h_r$ is divisible by some element of $X$. Since $\codim(S/M_1) = \codim(S/M_{m_1})$, it follows that $X\in \mathscr{R}(M_1)$. Therefore, $\mathscr{R}(M_{m_1})\subseteq \mathscr{R}(M_1)$. Now, let $X\in \mathscr{R}(M)$. Since $\codim(S/M) = \codim(S/M_1)$, and $M_1\subseteq M$, we must have that $X\in \mathscr{R}(M_1)$. Hence, $\mathscr{R}(M)\subseteq\mathscr{R}(M_1)$. We will show that $\mathscr{R}(M)=\mathscr{R}(M_1)\setminus\mathscr{R}(M_{m_1})$. Let $X\in \mathscr{R}(M)$, and let $x\in X$ be a variable that divides $m_1$. Since $h_1,\ldots,h_r$ is a complete intersection, there is an index $i$ such that $h_i$ is divisible by $x$, and $h_i$ is not divisible by any variable of $X\setminus\{i\}$. It follows that $h'_i$ is not divisible by any variable of $X\setminus \{x\}$. Moreover, since $x$ divides both squarefree monomials $h_i$ and $m_1$, $x\mid \gcd(h_i,m_1) = \dfrac{h_i m_1}{\lcm(h_i,m_1)} = \dfrac{h_i}{h'_i}$. Thus, $x$ must appear with exponents $1$ and $0$ in the factorizations of $h_i$ and $h'_i$, respectively. In particular, $h'_i$ is not divisible by $x$. This means that $h'_i$ is not divisible by any variable of $X$, and $X\notin\mathscr{R}(M_{m_1})$. We have proven that $\mathscr{R}(M)\subseteq\mathscr{R}(M_1)\setminus\mathscr{R}(M_{m_1})$. Now, if $X\in \mathscr{R}(M_1)\setminus\mathscr{R}(M_{m_1})$ then each of $m_2,\ldots,m_s,h_1,\ldots,h_r$ must be divisible by some element of $X$. Suppose, by means of contradiction, that $m_1$ is not divisible by any element of $X$. Let $l$ be a minimal generator of $M_1$, and let $x\in X$ be a divisor of $l$. Then, $x\mid \dfrac{\lcm(l,m_1)}{m_1} = l'$. This implies that each of $m'_2,\ldots,m'_s,h'_1,\ldots,h'_r$ is divisible by some element of $X$, which means that $X\in\mathscr{R}(M_{m_1})$, a contradiction. Thus, $m_1$ must be divisible by some element of $X$, and 
$X\in \mathscr{R}(M)$. We have proven that $\mathscr{R}(M) = \mathscr{R}(M_1)\setminus \mathscr{R}(M_{m_1})$, and given that $\mathscr{R}(M_{m_1})\subseteq \mathscr{R}(M_1)$, we have that 
$\left|\mathscr{R}(M)\right| = \left|\mathscr{R}(M_1)\right| - \left|\mathscr{R}(M_{m_1})\right|$. Combining these facts, we obtain $\e(S/M) = \left|\mathscr{R}(M_1)\right| - \left|\mathscr{R}(M_{m_1})\right| = \left|\mathscr{R}(M)\right|$.

Finally, the general case follows from the fact that $\e(S/M) =\e(S/M_{\pol})= \left|\mathscr{R}(M_{\pol})\right|$.
\end{proof}

The hypotheses of Theorem \ref{realizations} are not too restrictive. In fact, various classes of monomial ideals, including complete intersections, almost complete intersections, and artinian ideals, satisfy these conditions. (Moreover, we conjecture that Theorem \ref{realizations} holds for arbitrary monomial ideals.) 

\section{Chromatic ideals}

In this pivotal section, we define chromatic ideals, the algebraic counterpart of graphs.

\begin{definition}\label{DefChrom}
Given a graph $G$ on $n$ vertices, let $\Omega = \{\omega\subseteq V: \omega$ is a maximal independent set$\}$, and $\Gamma = \left\{\{1\},\{2\},\ldots,\{n\}\right\}\cup \Omega.$
We define the \textbf{chromatic ideal} of $G$, denoted $M_G$, as the monomial ideal $M_G = (m_1,\ldots,m_n)$, where $m_i = \prod\limits_{\substack{\omega\in\Gamma\\ i\in\omega}} x_{\omega}$. (We will say that the vertex $i$ and the monomial $m_i$ are \textbf{associated}.)
\end{definition}

\begin{example} \label{Example graph}
Let $G$ be the following graph:

\[\begin{tikzpicture}
\draw (0,0) rectangle (4,2);
\draw (2,0)-- (2,2);
\draw (2,2) -- (4,0);
 \node[circle, label=below left:{1}, fill=black, inner sep=1pt] at (4,0) {};
 \node[circle, label=below left:{2}, fill=black, inner sep=1pt] at (2,0) {};
  \node[circle, label=below left:{3}, fill=black, inner sep=1pt] at (0,0) {};
   \node[circle, label=below left:{4}, fill=black, inner sep=1pt] at (0,2) {};
    \node[circle, label=below left:{5}, fill=black, inner sep=1pt] at (2,2) {};
     \node[circle, label=below left:{6}, fill=black, inner sep=1pt] at (4,2) {};
\end{tikzpicture}\]
Let us construct the chromatic ideal of $G$. Notice that 
\[\Omega = \left\{\{1,3\},\{1,4\},\{2,4,6\},\{3,5\},\{3,6\}\right\}.\] Hence, $M_G = (m_1,\ldots,m_6)$, where 
\[
\begin{array}{ll}
m_1 = x_{\{1\}} x_{\{1,3\}} x_{\{1,4\}}& m_4 = x_{\{4\}}x_{\{1,4\}} x_{\{2,4,6\}}\\
m_2 = x_{\{2\}} x_{\{2,4,6\}}  & m_5 = x_{\{5\}}x_{\{3,5\}}\\
m_3 = x_{\{3\}}x_{\{1,3\}} x_{\{3,5\}} x_{\{3,6\}} & m_6 = x_{\{6\}} x_{\{3,6\}} x_{\{2,4,6\}}.
\end{array}\]
By doing the change of variables $a = x_{\{1,3\}}$, $b = x_{\{1,4\}}$, $c = x_{\{2,4,6\}}$, $d=x_{\{3,5\}}$, $e = x_{\{3,6\}}$, $f = x_{\{1\}}$, $g = x_{\{2\}}$, $h = x_{\{3\}}$, $i = x_{\{4\}}$, 
$j = x_{\{5\}}$, and $k = x_{\{6\}}$, $M_G$ can be expressed in the form
$M_G = (fab,gc,hade,ibc,jd,kce)$.

\end{example}

\begin{proposition}\label{Prop}
Suppose that the chromatic ideal $M_G$ of a graph $G$ is minimally generated by $n$ monomials $m_1,\ldots,m_n$. Then, $G = (V,E)$, where $V = \{1,\ldots,n\}$ and 
$E = \left\{\{i,j\}\subseteq V: \lcm(m_i,m_j) = 1\right\}$.
\end{proposition}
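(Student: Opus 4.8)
The plan is to read the displayed condition as $\gcd(m_i,m_j)=1$ (taken literally, $\lcm(m_i,m_j)=1$ forces $m_i=m_j=1$, which never happens since each $m_i$ is a nonempty product of variables) and then to prove the statement in two stages: first that the index set of the minimal generators of $M_G$ is forced to coincide with $V$, and second that the gcd criterion recovers the edge set $E$. In effect the proposition asserts that $G$ can be reconstructed from the ideal $M_G$, so the work is to check that Definition \ref{DefChrom} is ``invertible''.

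For the vertex set: by construction $M_G=(m_1,\dots,m_n)$ with $m_i$ the monomial associated to vertex $i$, and I would first verify that $\{m_1,\dots,m_n\}$ really is a minimal generating set, so that the hypothesis is nonvacuous and the labelling $i\leftrightarrow m_i$ is the one meant in the statement. The key observation is that $\{i\}\in\Gamma$ and $i\in\{i\}$, so $x_{\{i\}}\mid m_i$; on the other hand, for $j\ne i$ every $\omega\in\Gamma$ with $j\in\omega$ satisfies $\omega\ne\{i\}$, so $x_{\{i\}}\nmid m_j$. Hence no $m_i$ divides any $m_j$ with $j\ne i$ (in particular the $m_i$ are pairwise distinct), so the $m_i$ are exactly the minimal generators of $M_G$ and $V=\{1,\dots,n\}$ with vertex $i$ identified with $m_i$.

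For the edge set, the crux is the equivalence, valid for $i\ne j$: $\gcd(m_i,m_j)\ne 1$ if and only if some $\omega\in\Gamma$ contains both $i$ and $j$. This is immediate from $m_i=\prod_{\omega\in\Gamma,\, i\in\omega}x_\omega$: since distinct $\omega$ give distinct variables $x_\omega$ and each appears to the first power, a common prime divisor of $m_i$ and $m_j$ corresponds precisely to a common such $\omega$. Because $i\ne j$, such an $\omega$ cannot be one of the singletons $\{1\},\dots,\{n\}$, so it lies in $\Omega$; thus $\gcd(m_i,m_j)\ne 1$ exactly when $i$ and $j$ lie together in some maximal independent set.

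Finally I would connect this to adjacency. If $\{i,j\}\in E$, no independent set contains both $i$ and $j$, so a fortiori no maximal independent set does, giving $\gcd(m_i,m_j)=1$. Conversely, if $\{i,j\}\notin E$, then $\{i,j\}$ is an independent set, and since $G$ is finite it extends to a maximal independent set $\omega\in\Omega$ with $\{i,j\}\subseteq\omega$, so $\gcd(m_i,m_j)\ne 1$. Combining, $\{i,j\}\in E$ iff $\gcd(m_i,m_j)=1$, which is the claimed description of $E$. The argument is essentially bookkeeping; the only external fact used is that every independent set of a finite graph sits inside a maximal one, and the only real subtlety is ensuring, as in the second paragraph, that the minimality hypothesis genuinely pins the vertices to the minimal generators.
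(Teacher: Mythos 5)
Your proof is correct and follows essentially the same route as the paper's: both arguments pin down the vertex set via the ``private'' variables $x_{\{i\}}$ (which force each $m_i$ to be a minimal, non-redundant generator, so $n$ equals the number of vertices) and recover the edges from the presence or absence of a common variable $x_{\omega}$ with $\omega$ a maximal independent set. Your reading of the displayed condition as $\gcd(m_i,m_j)=1$ matches the paper's evident intent (its own proof argues that no variable is a \emph{common divisor} of $m_i$ and $m_j$), and the only difference is that you establish the converse inclusion by contraposition rather than directly, which is immaterial.
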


\begin{proof}
Let $V = \{1,\ldots,k\}$. By construction, $M_G$ is generated by the $k$ monomials associated to $1,2,\ldots,k$. Denote these monomials by $h_1,\ldots,h_k$. Then, $n\leq k$. Suppose that $n<k$. This means that one of the generators $h_1,\ldots,h_k$ is not a minimal generator. That is, there are generators $h_i$ and $h_j$, with $i\ne j$, such that $h_i\mid h_j$. By construction, if $1\leq r$, $s\leq k$, then $x_{\{r\}}\mid h_s$ if and only if $r = s$. Therefore, $x_{\{i\}}\mid h_i$, but $x_{\{i\}}\nmid h_j$, which means that $h_i\nmid h_j$, a contradiction. We conclude that $n = k$, and $V = \{1,\ldots,n\}$.

Now, if $\{i,j\} \in E$, then no independent set $\omega$ contains $i$ and $j$, simultaneously. Therefore, no variable $x_{\omega}$ can be a common divisor of $m_i$ and $m_j$. Thus, $\lcm(m_i,m_j) = 1$. Conversely, if $\lcm(m_i,m_j) = 1$, we must have that no variable $x_{\omega}$ divides both $m_i$ and $m_j$. In turn, this implies that no maximal independent set $\omega$ contains both $i,j$. It follows that $i$ and $j$ cannot belong to the same independent set. In particular, $\{i,j\}$ is not an independent set, and thus, $\{i,j\}\in E$.
\end{proof}

In light of Proposition \ref{Prop}, we conclude that a graph $G$ and its chromatic ideal $M_G$ yield the same information. In fact, $M_G$ can be obtained from $G$, and vice versa. 

\begin{definition}
Let $M$ be minimally generated by monomials $m_1, \ldots, m_n$. A minimal generator $m_i$ is called \textbf{dominant}, if there is a variable $x$ such that the exponent with which $x$ appears in the factorization of $m_i$ is larger than the exponent with which $x$ appears in the factorization of $m_j$, for all $j \neq i$. If each minimal generator of $M$ is dominant, then we say that $M$ is a dominant ideal.
\end{definition}

\begin{example}
Let $M=(m_1=a^2,m_2=ab^2,m_3=bc)$. Notice that $m_1$ is dominant, for the exponent with which $a$ appears in the factorization of $m_1$ is larger than the exponent with which $a$ appears in the factorizations of $m_2$ and $m_3$. With a similar argument, it can be shown that $m_2$ and $m_3$ are also dominant generators and therefore, $M$ is a dominant ideal.
\end{example}

An interesting feature of dominant ideals is that they characterize when the Taylor resolution is minimal [Al]. The next result shows that chromatic ideals are dominant.

\begin{proposition}\label{chrDOM}
For every graph $G$, the chromatic ideal $M_G$ is dominant.
\end{proposition}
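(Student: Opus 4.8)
The plan is to show directly that for each vertex $i$, the monomial $m_i$ is dominant, by exhibiting a variable that appears with a strictly larger exponent in $m_i$ than in any other $m_j$. The natural candidate is $x_{\{i\}}$, the variable attached to the singleton set $\{i\}$, since $\{i\} \in \Gamma$ for every vertex $i$.

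First I would recall from Definition \ref{DefChrom} that $m_i = \prod_{\omega \in \Gamma,\, i \in \omega} x_\omega$, and that each variable $x_\omega$ occurs with exponent $0$ or $1$ in every $m_j$ (chromatic ideals are squarefree). So to prove $m_i$ is dominant it suffices to find one variable $x_\omega$ with $x_\omega \mid m_i$ but $x_\omega \nmid m_j$ for all $j \neq i$; equivalently, a set $\omega \in \Gamma$ with $i \in \omega$ and $j \notin \omega$ for all $j \neq i$. The singleton $\omega = \{i\}$ does exactly this: $\{i\} \in \Gamma$ by construction, $i \in \{i\}$, and for $j \neq i$ we have $j \notin \{i\}$. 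Hence $x_{\{i\}} \mid m_i$ while $x_{\{i\}} \nmid m_j$ for every $j \neq i$, so the exponent of $x_{\{i\}}$ in $m_i$ (namely $1$) strictly exceeds its exponent in every $m_j$ with $j \neq i$ (namely $0$). Therefore $m_i$ is dominant, and since $i$ was arbitrary, $M_G$ is a dominant ideal.

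I do not anticipate a genuine obstacle here: the result is essentially immediate once one observes that the singleton sets were deliberately included in $\Gamma$. The only point requiring a word of care is the squarefree observation — namely that every $x_\omega$ appears to at most the first power in each generator, which follows because each $m_i$ is a product of distinct variables indexed by distinct sets $\omega$. With that in hand, comparing exponents reduces to comparing membership $0$ versus $1$, and the singleton variable witnesses dominance for each generator.
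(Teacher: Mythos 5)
Your proof is correct and follows essentially the same route as the paper: both arguments observe that the singleton variable $x_{\{i\}}$ divides $m_i$ but no other generator, which immediately witnesses dominance of each $m_i$. The extra remark about squarefreeness is a reasonable clarification but does not change the substance.
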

\begin{proof}
Let $1,\ldots,n$ be the vertices of $G$, and denote by $m_1,\ldots,m_n$ their associated monomials. Then, $M=(m_1,\ldots,m_n)$. Choose a generator $m_i$. By construction, the variable $x_{\{i\}}$ appears in the factorization of $m_i$, but not in the factorization of $m_j$, for all $j \neq i$. Therefore, $m_i$ is both a minimal generator and a dominant generator. Since $i$ is arbitrary, $M_G$ is a dominant ideal.
\end{proof}

Next, we give another simple but important property of chromatic ideals.

\begin{proposition}\label{PropCI}
If $G$ is a graph containing a $k$-clique as a subgraph, then its chromatic ideal $M_G$ can be expressed in the form $M_G = (m_1,\ldots,m_s,h_1,\ldots,h_k)$, where $(h_1,\ldots,h_k)$ is a complete intersection.
\end{proposition}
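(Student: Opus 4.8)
The plan is to exploit the combinatorial structure of the clique together with the way chromatic ideals are built. Suppose $G$ contains a $k$-clique on the vertices $v_1,\ldots,v_k$. The key observation is that any maximal independent set $\omega$ of $G$ can contain \emph{at most one} of the vertices $v_1,\ldots,v_k$, since any two of them are adjacent. First I would record this fact and examine its consequence for the associated monomials $h_1,\ldots,h_k$ of $v_1,\ldots,v_k$: a variable $x_\omega$ with $\omega\in\Omega$ can divide at most one of $h_1,\ldots,h_k$. The singleton sets $\{v_1\},\ldots,\{v_k\}$ in $\Gamma$ behave the same way, since $x_{\{v_t\}}$ divides only $h_t$. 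Hence no variable of $S$ divides two distinct $h_t$'s, which is exactly the statement that $\gcd(h_s,h_t)=1$ for $s\neq t$.

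Next I would translate "pairwise coprime monomial generators" into "complete intersection." For squarefree (or more generally pairwise coprime) monomials, the ideal $(h_1,\ldots,h_k)$ is generated by a regular sequence; this is standard, but if a self-contained argument is wanted I would note that $\codim(S/(h_1,\ldots,h_k))=k$ by Definition \ref{Definition codimension} (one must pick a distinct variable dividing each $h_t$, and $k$ variables suffice), while the number of generators is also $k$, so the ideal is a complete intersection. Combined with the previous paragraph, this gives that $(h_1,\ldots,h_k)$ is a complete intersection inside $M_G$.

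Finally I would assemble the presentation $M_G=(m_1,\ldots,m_s,h_1,\ldots,h_k)$, where $h_1,\ldots,h_k$ are the monomials associated to the clique vertices and $m_1,\ldots,m_s$ are the monomials associated to the remaining $n-k$ vertices (so $s=n-k$). By Definition \ref{DefChrom} this is precisely the minimal generating set of $M_G$, just with the generators reordered, and by Proposition \ref{chrDOM} all these generators are minimal. This yields the desired form.

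\textbf{Main obstacle.} The only real content is the coprimality argument: one must be careful that it uses \emph{all} the relevant elements of $\Gamma$ — both the maximal independent sets $\omega$ and the singletons $\{v_t\}$ — and that "at most one clique vertex per independent set" genuinely forces $\gcd(h_s,h_t)=1$ rather than merely a weaker condition. After that, passing from pairwise coprimality to "complete intersection" is routine (and in the squarefree setting it is immediate), so I expect no difficulty there.
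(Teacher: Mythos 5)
Your proposal is correct and follows essentially the same route as the paper: observe that any $\omega\in\Gamma$ containing a clique vertex $i$ excludes every other clique vertex $j$ (adjacency forbids them from lying in a common independent set, and singletons trivially contain only one vertex), so no variable divides two of the $h_t$'s, whence $(h_1,\ldots,h_k)$ is a complete intersection. The paper leaves the step from pairwise coprimality to "complete intersection" implicit, whereas you justify it via the codimension count; this is a harmless elaboration, not a different method.
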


\begin{proof}
Let $1, \ldots, k$ be the vertices of a $k$-clique contained in $G$, and denote by $h_1,\ldots,h_k$ their associated monomials. Consider two of these monomials, say $h_i$ and $h_j$, and a variable $x_\omega$ dividing $h_i$. Notice that $i \in \omega$. Since $i$ and $j$ are vertices of the same clique, they must be adjacent and hence, no independent set contains both $i$ and $j$. It follows that $j \notin \omega$ and thus, $x_{\omega} \nmid h_j$. This implies that $\lcm(h_i,h_j) = 1$. Since $i$ and $j$ are arbitrary, $(h_1,\ldots,h_k)$ must be a complete intersection.
\end{proof}

The following theorem proves that the chromatic number of a graph is equal to the codimension of its chromatic ideal, which explains the term chromatic ideal.

\begin{theorem}\label{Theorem 3}
For an arbitrary graph $G$, $\chi(G) = \codim(S/M_G)$.
\end{theorem}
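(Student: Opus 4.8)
The plan is to unwind the combinatorial meaning of $\codim(S/M_G)$ via Definition \ref{Definition codimension}, and then identify it with the minimum number of independent sets needed to cover the vertex set, which is exactly $\chi(G)$.

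First I would record the basic divisibility rule coming from Definition \ref{DefChrom}: for $\omega\in\Gamma$ and a vertex $i$, the variable $x_\omega$ divides the associated monomial $m_i$ if and only if $i\in\omega$. Consequently, a subset $X=\{x_{\omega_1},\ldots,x_{\omega_r}\}$ of the variables has the property that every minimal generator $m_i$ of $M_G$ is divisible by some element of $X$ if and only if $\omega_1\cup\cdots\cup\omega_r=V$. By Definition \ref{Definition codimension}, this means $\codim(S/M_G)$ equals the least number of members of $\Gamma$ whose union is all of $V$.

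Next I would show this least number equals the least number of \emph{independent sets} whose union is $V$. Every member of $\Gamma$ is an independent set (a singleton is trivially independent, and each element of $\Omega$ is independent by definition), so a cover of $V$ by members of $\Gamma$ is in particular a cover by independent sets, giving one inequality. For the reverse, if $I_1,\ldots,I_r$ are independent sets with $I_1\cup\cdots\cup I_r=V$, extend each $I_j$ to a maximal independent set $\omega_j\in\Omega\subseteq\Gamma$; then still $\omega_1\cup\cdots\cup\omega_r=V$, so there is a $\Gamma$-cover of the same size. Finally I would match ``minimum cover of $V$ by independent sets'' with $\chi(G)$: given a proper $k$-coloring $f\colon V\to\{0,\ldots,k-1\}$, the color classes $f^{-1}(0),\ldots,f^{-1}(k-1)$ are independent sets covering $V$, so the minimum cover number is $\le\chi(G)$; conversely, given independent sets $I_1,\ldots,I_k$ covering $V$, setting $f(i)$ to be the least index $j$ with $i\in I_j$ yields a proper $k$-coloring (two adjacent vertices assigned the same value $j$ would both lie in the independent set $I_j$), so $\chi(G)\le$ the minimum cover number. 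Combining the three steps gives $\chi(G)=\codim(S/M_G)$.

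I do not expect a serious obstacle here; the only points requiring care are the passage between covers and partitions (we never need disjointness, and the ``least index'' tie-breaking rule handles it cleanly) and the observation that the singletons in $\Gamma$ are present only to make $M_G$ have $n$ distinct minimal generators, as in Proposition \ref{Prop} — they contribute nothing essential to the codimension computation since each singleton lies inside some maximal independent set.
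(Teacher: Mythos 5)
Your argument is correct and is essentially the paper's own proof, just reorganized as a chain of equalities (codimension $=$ minimum $\Gamma$-cover $=$ minimum independent-set cover $=\chi(G)$) rather than two inequalities: the paper likewise extends color classes to maximal independent sets to get a realization, and disjointifies a realization (your least-index rule is its $\gamma_i'=\gamma_i\setminus\bigcup_{j<i}\gamma_j$) to get a coloring. No gaps; your side remark that singleton variables are harmless because singletons are independent sets matches the paper's implicit treatment.
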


\begin{proof}
Let $r = \chi(G)$. Then, there is a partition $V = W_1\cup\ldots\cup W_r$ of the vertex set of $G$, into $r$ independent sets. For each $1\leq i\leq r$, let  $\omega_i$ be a maximal independent set containing $W_i$. Let $m_s$ be a minimal generator of $M$, and let $s$ be the vertex associated to $m_s$. Then, there is an $1\leq i\leq r$, such that $s\in W_i \subseteq \omega_i$. It follows that the variable $x_{\omega_i}$ appears in the factorization of $m_s$, which proves that every minimal generator of $M_G$ is divisible by one of $x_{\omega_1}, \ldots, x_{\omega_r}$. Hence, $\codim(S/M_G) \leq r$. 

Suppose that $\codim(S/M_G) = h< r$. Then, there exists a realization $\{x_{\gamma_1}, \ldots, x_{\gamma_h}\}$ of  $\codim(S/M)$. In particular, $\gamma_1, \ldots, \gamma_h$ are independent sets. Consider a vertex $s$ and its associated monomial $m_s$. Then, for some $1\leq i\leq h$, $m_s$ is divisible by $x_{\gamma_i}$, which implies that $s\in \gamma_i$. This proves that $V = \bigcup\limits_{i=1}^h {\gamma}_i$. Let ${\gamma}'_1={\gamma}_1$, ${\gamma}'_2={\gamma}_2 \setminus {\gamma}_1$, and more generally, ${\gamma}'_i = {\gamma}_i\setminus \bigcup\limits_{j=1}^{i-1} {\gamma}_j$. Then, $\{ {\gamma}'_1, \ldots, {\gamma}'_h \}$ is a partition of $V$ into $h$ independent sets, which implies that $\chi(G)\leq h<r$, a contradiction. This shows that $\chi(G) =r= \codim(S/M_G)$.
\end{proof}

Given the strong connection between a graph $G$ and its chromatic ideal $M_G$, it is not difficult to interpret invariants of $G$ in terms of invariants of $M_G$. One instance of this interplay is seen in Theorem \ref{Theorem 3}. Here is another example. The \textbf{degree} of a vertex $i$ of $G$, denoted $\deg(i)$, is the number of edges that are incident to $i$. It follows from Proposition \ref{Prop} that $\deg(i)$ can be expressed algebraically as the number of minimal generators $m_j$ of $M_G$ such that $\lcm(m_i,m_j) = 1$.

The advantage of interpreting graph invariants in terms of monomial invariants is the fact that we can put the theory of monomial ideals to the service of graph theory (and vice versa), which we do in the next section.

\section{Multiplicities and the chromatic polynomial}

The interaction between a graph and its chromatic ideal reaches its climax in the next theorem, where the chromatic number and the chromatic polynomial of $G$, as well as the codimension and multiplicity of $M_G$ are intertwined to create a useful formula.

\begin{theorem}\label{Theorem 5.6}
Suppose that a graph $G$ can be expressed as the union of finitely many $k$-cliques. If $\chi(G) = k$, then $P_G(k) = k! \e(S/M_G)$.
\end{theorem}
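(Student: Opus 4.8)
The plan is to combine the three threads of the paper: Theorem~\ref{realizations} (multiplicity = number of realizations of the codimension of the polarization), Theorem~\ref{Theorem 3} ($\chi(G) = \codim(S/M_G)$), and the structural fact that $M_G$ is a dominant ideal (Proposition~\ref{chrDOM}), which one expects forces $M_G$ to be its own polarization, so that $\mathscr{R}(M_G)$ and $\mathscr{R}((M_G)_{\pol})$ coincide. First I would verify that the hypotheses of Theorem~\ref{realizations} are met: since $G$ is a union of $k$-cliques and $\chi(G) = k$, pick one such $k$-clique; by Proposition~\ref{PropCI} we may write $M_G = (m_1,\ldots,m_s,h_1,\ldots,h_k)$ with $(h_1,\ldots,h_k)$ a complete intersection, and by Theorem~\ref{Theorem 3} we have $\codim(S/M_G) = \chi(G) = k$, which is exactly the number of the $h_i$'s. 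So Theorem~\ref{realizations} applies and gives $\e(S/M_G) = |\mathscr{R}((M_G)_{\pol})|$. Because every variable $x_\omega$ appearing in any generator $m_i = \prod_{\omega \ni i} x_\omega$ appears to the first power (the generators are squarefree), $M_G$ is already squarefree, hence equal to its polarization, so $\e(S/M_G) = |\mathscr{R}(M_G)|$.

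The heart of the argument is then to show $|\mathscr{R}(M_G)| = P_G(k)/k!$, equivalently $P_G(k) = k!\,|\mathscr{R}(M_G)|$. Here I would set up the correspondence the introduction advertises between $k$-colorings of $G$ and realizations of $\codim(S/M_G)$. Given a realization $X = \{x_{\gamma_1},\ldots,x_{\gamma_k}\}$, the sets $\gamma_1,\ldots,\gamma_k$ are maximal independent sets (or singletons, but a realization of minimum size $k$ with $\chi(G)=k$ cannot use a singleton unless forced — I would argue each $\gamma_i$ is a maximal independent set, since every generator must be hit and the $\gamma_i$ must jointly cover $V$ as in the proof of Theorem~\ref{Theorem 3}), and as shown there, disjointifying them yields a partition of $V$ into $k$ independent sets, i.e.\ an unordered proper $k$-coloring. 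Conversely an unordered $k$-coloring (a partition of $V$ into $k$ nonempty independent classes — nonempty because $\chi(G)=k$) determines, by taking a maximal independent set containing each class, a candidate realization; I must check this is well-defined (the maximal independent set containing a color class is unique, or at least that the resulting set of variables is forced) and has exactly $k$ elements. The key point making this a bijection is that when $G$ is a union of $k$-cliques, a $k$-coloring assigns to the $k$ vertices of each clique the $k$ distinct colors, which rigidly ties each color class to a unique maximal independent set. Once the bijection between $\mathscr{R}(M_G)$ and unordered $k$-colorings is established, multiplying by $k!$ accounts for the orderings of the color set $\{0,1,\ldots,k-1\}$, giving $P_G(k) = k!\,|\mathscr{R}(M_G)| = k!\,\e(S/M_G)$.

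The main obstacle I anticipate is proving that the map from realizations to unordered $k$-colorings is a genuine bijection — specifically, well-definedness and injectivity in the direction from colorings to realizations. The subtlety is that a color class (an independent set) can in general be extended to a maximal independent set in more than one way, so the "candidate realization" need not be canonical; resolving this requires exploiting the union-of-$k$-cliques hypothesis to show that, under $\chi(G) = k$, each color class in \emph{any} proper $k$-coloring is already a maximal independent set, or is contained in a unique one whose associated variable is the one forced to appear in the realization. I would prove this by a covering/counting argument: the $k$ independent sets in a realization must cover all $n$ vertices and, against each $k$-clique, the $k$ sets must meet its $k$ vertices in $k$ distinct singletons, which pins down enough structure to force maximality. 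A secondary, more routine obstacle is confirming that $M_G$ is squarefree so that polarization is trivial — this is immediate from Definition~\ref{DefChrom} since each $x_\omega$ occurs with exponent at most $1$ in each $m_i$.
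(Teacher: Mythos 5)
Your proposal is correct and follows essentially the same route as the paper: reduce via Proposition \ref{PropCI}, Theorem \ref{Theorem 3}, and Theorem \ref{realizations} to showing $\e(S/M_G)=\left|\mathscr{R}(M_G)\right|$ (the polarization step being trivial since $M_G$ is squarefree), and then biject realizations with partitions of $V$ into $k$ independent sets, the crux being that the union-of-$k$-cliques hypothesis forces every color class of a $k$-coloring to meet each clique in exactly one vertex and hence to be a maximal independent set. The obstacle you flag (canonicity of the maximal independent set attached to a color class) is resolved exactly as the paper does, by proving the class is already maximal.
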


\begin{proof}
Denote by $\{K(1),\ldots,K(r)\}$ the finite class of $k$-cliques stated by the theorem. Since $P_G(k)$ equals the number of configurations of $k$-colorings of $G$ multiplied by the number of permutations of $k$ colors, it suffices to prove that $\e(S/M_G)$ equals the number of configurations of $k$-colorings of $G$. That is, we only need to prove that $\e(S/M_G)$ equals the number of partitions of $V$ into $k$ independent sets. Since $G$ contains a $k$-clique as subgraph, it follows from Proposition  \ref{PropCI} that the minimal generating set of $M_G$ must contain a regular sequence of $k$ elements. By Theorem \ref{Theorem 3}, $\codim(S/M_G) = \chi(G) = k$, and by Theorem \ref{realizations}, $\e(S/M_G)$ equals the number of realizations of $\codim(S/M_G)$. Therefore, the theorem will be proven if we show that the rule 
\[\{W_1,\ldots,W_k\}\xrightarrow{f} \{x_{W_1},\ldots,x_{W_k}\}\]
defines a bijective correspondence between the class of partitions of $V$ into $k$ independent sets, and the class of realizations of $\codim(S/M_G)$.\\
The proof consists of three steps. First, $f$ is well-defined. Let $V = W_1\cup\ldots\cup W_k$ be a partition of $V$ into $k$ independent sets. We will prove that $W_1,\ldots,W_k$ are maximal independent sets. Suppose not. Then for some $i$, $W_i$ is strictly contained in an independent set $W$. Let $v\in W\setminus W_i$, and let $K(s)$ be a $k$-clique containing $v$. Since $W$ is independent, no vertex of $W\setminus\{v\}$ is in $K(s)$. In particular, $W_i\cap K(s) = \varnothing$. It follows that 
$K(s) = \left(W_1\cap K(s)\right) \cup \ldots \cup\widehat{ \left(W_i\cap K(s)\right)}\cup\ldots \cup \left(W_k\cap K(s)\right)$ is a partition of $K(s)$ into $k-1$ independent sets, a contradiction. Thus, $W_1,\ldots,W_k$ must be maximal independent sets. 
For each vertex $v\in V$, let $W$ be a maximal independent set containing $v$. Then, the corresponding monomial generator $m_v$ of $M_G$ is divisible by $x_W$. Hence, $\{x_{W_1},\ldots,x_{W_k}\}$ is a realization of $\codim(S/M_G)$.\\
Next, $f$ is one-to-one. If $\{x_{Y_1},\ldots,x_{Y_k}\} = \{x_{Z_1},\ldots,x_{Z_k}\}$, then $\{Y_1,\ldots,Y_k\} = \{Z_1,\ldots,Z_k\}$.\\
Finally, $f$ is onto. Let $\{x_{W_1},\ldots,x_{W_k}\}$ be a realization of $\codim(S/M_G)$. Then, $W_1,\ldots,W_k$ are independent sets, and $V = \bigcup\limits_{i=1}^k W_i$. Moreover, $V= W_1\cup (W_2\setminus W_1)\cup\ldots \cup\left (W_k\setminus \bigcup\limits_{i=1}^{k-1} W_i\right)$ is a partition of $V$ into $k$ independent sets. Suppose that, for some $1\leq s\leq k$, there is a vertex $v\in V$, such that $v\in W_s \cap\left(\bigcup\limits_{i=1}^{s-1} W_i\right)$. Let $K(j)$ be a $k$-clique such that $v\in K(j)$. Since $W_s$ is independent, $\left(W_s\setminus \bigcup\limits_{i=1}^{s-1}W_i\right)\cap K(j)= \varnothing$. Therefore, 
	\[K(j) =\left [W_1\cap K(j)\right]\cup \ldots \cup \left [\left(W_s\setminus \bigcup\limits_{i=1}^{s-1}W_i\right)\cap K(j)\right]^{\wedge}\cup \ldots \cup \left[\left(W_k\setminus \bigcup\limits_{i=1}^{k-1}W_i\right)\cap K(j)\right]\]
is a partition of $K(j)$ into $k-1$ independent sets, a contradiction. It follows that $W_s\cap \left(\bigcup\limits_{i=1}^{s-1} W_i\right) = \varnothing$, which implies that the sets $W_1,\ldots,W_k$ are pairwise disjoint. Then, $V = W_1\cup (W_2\setminus W_1)\cup\ldots \cup \left(W_k\setminus \bigcup\limits_{i=1}^{k-1} W_i\right) = W_1\cup\ldots\cup W_k$ is a partition of $V$ into $k$ independent sets.
\end{proof}

The next theorem, stated below for completeness, can be found in [Al1, Corollary 5.4].

\begin{theorem}\label{MonomialMult}
  Suppose that $M$ is a dominant ideal that can be expressed in the form $M=(m_1,\ldots,m_s,h_1,\ldots,h_k)$, where $k=\codim(S/M)$, and $(h_1,\ldots,h_k)$ is a complete intersection. Then,
  \[\e(S/M) = \sum_{\substack{1\le r_1<\cdots<r_j\le s\\0\le j\le s}} (-1)^j \prod\limits_{i=1}^k \deg\left(\dfrac{\lcm(m_{r_1},\ldots,m_{r_j},h_i)}{\lcm(m_{r_1},\ldots,m_{r_j})}\right).\]
\end{theorem}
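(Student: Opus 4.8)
The plan is to derive the formula from Theorem \ref{realizations} together with an inclusion--exclusion count, proving it first for squarefree $M$ and then reducing the general case by polarization. So assume first that $M$ is squarefree, so that $M_{\pol}=M$ and Theorem \ref{realizations} gives $\e(S/M)=\left|\mathscr{R}(M)\right|$; it therefore suffices to count the realizations of $\codim(S/M)=k$. The key structural point is this. Let $D_\ell$ be the set of variables dividing $h_\ell$, so that $|D_\ell|=\deg(h_\ell)$; since $(h_1,\ldots,h_k)$ is a complete intersection, the sets $D_1,\ldots,D_k$ are pairwise disjoint. If $X$ is a realization then $|X|=k$ and $X$ meets every $D_\ell$ (because $h_\ell$ is hit by some variable of $X$); as the $D_\ell$ are disjoint and there are $k$ of them, $X$ must consist of exactly one variable from each $D_\ell$. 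Conversely a set $\{y_1,\ldots,y_k\}$ with $y_\ell\in D_\ell$ is a realization precisely when every $m_i$ is divisible by some $y_\ell$. Hence $\mathscr{R}(M)$ is in bijection with the set of tuples $(y_1,\ldots,y_k)\in D_1\times\cdots\times D_k$ such that each $m_i$ is divisible by some coordinate.

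Next I would count these tuples by inclusion--exclusion on the ``bad'' events. For $i\in\{1,\ldots,s\}$ let $B_i\subseteq D_1\times\cdots\times D_k$ be the set of tuples no coordinate of which divides $m_i$; the realizations are exactly the tuples lying in no $B_i$, so
\[\left|\mathscr{R}(M)\right|=\sum_{R\subseteq\{1,\ldots,s\}}(-1)^{|R|}\left|\bigcap_{i\in R}B_i\right|.\]
Writing $m_R=\lcm(m_i:i\in R)$ (with $m_\varnothing=1$), a tuple lies in $\bigcap_{i\in R}B_i$ iff no coordinate divides $m_R$, i.e.\ iff for every $\ell$ the variable $y_\ell$ divides $h_\ell$ but not $m_R$; since everything is squarefree the number of such $y_\ell$ equals $\deg\!\left(\lcm(m_R,h_\ell)/m_R\right)$, and disjointness of the $D_\ell$ makes the count factor as $\prod_{\ell=1}^{k}\deg\!\left(\lcm(m_R,h_\ell)/m_R\right)$. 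Substituting this and writing $R=\{r_1<\cdots<r_j\}$ produces precisely the sum in the statement.

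For the general case, $\e(S/M)=\e(S/M_{\pol})$, and $M_{\pol}=(m_{1,\pol},\ldots,m_{s,\pol},h_{1,\pol},\ldots,h_{k,\pol})$ is squarefree; the monomials $h_{\ell,\pol}$ still have pairwise disjoint supports, hence form a complete intersection, and $\codim(S/M_{\pol})=\codim(S/M)=k$ since polarization preserves codimension. Thus the squarefree case applies to $M_{\pol}$ and expresses $\e(S/M)$ as the analogous alternating sum over $R\subseteq\{1,\ldots,s\}$, but in the polarized monomials. A computation variable by variable then shows $\deg\!\left(\lcm((m_R)_{\pol},(h_\ell)_{\pol})/(m_R)_{\pol}\right)=\deg\!\left(\lcm(m_R,h_\ell)/m_R\right)$ --- both equal $\sum_x\max\{0,\,\alpha_x-\beta_x\}$, where $\alpha_x$ and $\beta_x$ are the exponents of the variable $x$ in $h_\ell$ and in $m_R$ respectively --- so the two sums coincide and the formula follows.

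The heart of the argument is the structural description of realizations in the squarefree case: the claim that a realization uses exactly one variable from each $D_\ell$ is exactly where both hypotheses --- that $(h_1,\ldots,h_k)$ is a complete intersection \emph{and} that $k=\codim(S/M)$ --- are genuinely used; everything after that is bookkeeping, and I would expect the polarization step and the degree identity to be routine. (Incidentally, the dominance hypothesis is not needed for this route; it is the condition making the Taylor resolution of $M$ minimal, which is the path underlying the proof in [Al1].)
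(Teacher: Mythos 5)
Your argument is correct, but it cannot be compared step-for-step with the paper's, because the paper does not prove Theorem \ref{MonomialMult} at all: it imports it verbatim from [Al1, Corollary 5.4], where (as the surrounding remarks on dominant ideals suggest) the formula comes out of the minimality of the Taylor resolution. What you do instead is derive the formula from the paper's own Theorem \ref{realizations} by an inclusion--exclusion count of realizations, and every step checks out: the pairwise disjointness of the supports $D_\ell$ together with $|X|=k=\codim(S/M)$ does force a realization to consist of exactly one variable from each $D_\ell$ and nothing else; a variable fails to divide $\lcm(m_i: i\in R)$ iff it fails to divide every $m_i$ with $i\in R$, so $\bigl|\bigcap_{i\in R}B_i\bigr|$ factors as $\prod_\ell\deg\bigl(\lcm(m_R,h_\ell)/m_R\bigr)$; and polarization commutes with $\lcm$ variable by variable, preserves degrees, disjointness of supports, minimality of generators, and codimension, so the reduction to the squarefree case is sound. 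Your route buys self-containedness relative to this paper and the (correct, and worth recording) observation that dominance is not needed for the formula given Theorem \ref{realizations} --- dominance only powers the resolution-theoretic proof in [Al1]. The price is that the dependence on [Al1] is relocated rather than removed, since the proof of Theorem \ref{realizations} itself invokes [Al1, Lemma 4.1]; also, you should say explicitly that $m_1,\ldots,m_s,h_1,\ldots,h_k$ is taken to be the minimal generating set (both the definition of dominance and the definition of $\mathscr{R}$ refer to minimal generators), although redundant generators would change neither the set of realizations nor the value of the alternating sum.
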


\begin{corollary}\label{C}
Suppose that a graph $G$ can be expressed as the union of finitely many $k$-cliques. If $\chi(G) = k$, then
\begin{enumerate}[(i)]
\item $M_G$ can be expressed in the form $M_G = (m_1,\ldots,m_s,h_1,\ldots,h_k)$, where $(h_1,\ldots,h_k)$ is a complete intersection.
\item $P_G(k) = k! \sum\limits_{\substack{1\le r_1<\cdots <r_j\le s\\ 0\le j\le s}} (-1)^j \prod\limits_{i=1}^k \deg\left(\dfrac{\lcm(m_{r_1},\ldots,m_{r_j},h_i)}{\lcm(m_{r_1},\ldots,m_{r_j})}\right)$.
\end{enumerate}
\end{corollary}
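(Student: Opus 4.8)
The plan is to assemble the corollary entirely from results already established, so the work is one of bookkeeping rather than new ideas. For part (i), observe that a graph expressed as the union of finitely many $k$-cliques contains a $k$-clique as a subgraph (namely any one of the cliques in the given union). Hence Proposition \ref{PropCI} applies directly and produces a presentation $M_G = (m_1,\ldots,m_s,h_1,\ldots,h_k)$ in which $(h_1,\ldots,h_k)$ is a complete intersection; here I would take $h_1,\ldots,h_k$ to be the monomials associated to the vertices of one fixed $k$-clique $K$ of $G$, and keep that choice for the rest of the proof.

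For part (ii), the strategy is to chain Theorem \ref{Theorem 5.6} with Theorem \ref{MonomialMult}. First, the hypotheses of the corollary are exactly those of Theorem \ref{Theorem 5.6}, so that theorem gives $P_G(k) = k!\,\e(S/M_G)$. It then remains to evaluate $\e(S/M_G)$ through the closed formula of Theorem \ref{MonomialMult}. To invoke that theorem with $M = M_G$ I would verify its three hypotheses: $M_G$ is a dominant ideal, which is Proposition \ref{chrDOM}; $M_G$ admits a presentation $M_G = (m_1,\ldots,m_s,h_1,\ldots,h_k)$ with $(h_1,\ldots,h_k)$ a complete intersection, which is exactly part (i); and $k = \codim(S/M_G)$, which follows from Theorem \ref{Theorem 3} together with the standing hypothesis $\chi(G) = k$. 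With these in hand, Theorem \ref{MonomialMult} yields
\[
\e(S/M_G) = \sum_{\substack{1\le r_1<\cdots<r_j\le s\\0\le j\le s}} (-1)^j \prod_{i=1}^k \deg\!\left(\frac{\lcm(m_{r_1},\ldots,m_{r_j},h_i)}{\lcm(m_{r_1},\ldots,m_{r_j})}\right),
\]
and substituting this into $P_G(k) = k!\,\e(S/M_G)$ gives the asserted identity.

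Since every ingredient is already proven, there is no genuine obstacle; the only point requiring a moment of care is to make sure the decomposition $M_G = (m_1,\ldots,m_s,h_1,\ldots,h_k)$ fixed in part (i) is the very one fed into Theorem \ref{MonomialMult} in part (ii) — that is, that the $h_i$ in both places are the monomials of a single, once-and-for-all chosen $k$-clique, so the two formulas refer to the same labelling of generators. Once this is pinned down, part (ii) is a pure substitution.
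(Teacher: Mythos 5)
Your proposal is correct and follows essentially the same route as the paper: part (i) via Proposition \ref{PropCI}, and part (ii) by combining Proposition \ref{chrDOM}, Theorem \ref{Theorem 3}, Theorem \ref{MonomialMult}, and Theorem \ref{Theorem 5.6}. Your extra remark about fixing a single decomposition $M_G = (m_1,\ldots,m_s,h_1,\ldots,h_k)$ for both parts is a sensible point of care that the paper leaves implicit.
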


\begin{proof}
(i) It follows immediately from Proposition \ref{PropCI}. \\
(ii) By Proposition \ref{chrDOM}, $M_G$ is dominant and, by Theorem \ref{Theorem 3}, $\codim(S/M_G) = k$. Therefore, by Theorem \ref{MonomialMult}, we have
\[\e(S/M_G) = \sum\limits_{\substack{1\le r_1<\cdots <r_j\le s\\ 0\le j\le s}} (-1)^j \prod\limits_{i=1}^k \deg\left(\dfrac{\lcm(m_{r_1},\ldots,m_{r_j},h_i)}{\lcm(m_{r_1},\ldots,m_{r_j})}\right). \]
Now, our claim follows directly from Theorem \ref{Theorem 5.6}.
\end{proof}

\section{The Erdős-Faber-Lovász conjecture}

We will explore classes of ideals that satisfy the hypotheses of Theorem \ref{Theorem 5.6} and Corollary \ref{C}. Our main source of examples is derived from the following conjecture, originally posed by P. Erdős, V. Faber, and L. Lovász, in 1972 [Er]. 

\begin{conjecture}\label{conjecture}
If $G$ is the union of $k$ $k$-cliques that intersect pairwise in at most one vertex, then its chromatic number is $\chi(G)=k$.
\end{conjecture}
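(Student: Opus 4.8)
The statement has two halves, of wildly different difficulty. One direction is immediate: since $G$ is a union of $k$-cliques, it contains a $k$-clique as a subgraph, so $\chi(G)\ge k$. In the language of the previous sections this reads even more cleanly: Proposition~\ref{PropCI} gives $M_G=(m_1,\ldots,m_s,h_1,\ldots,h_k)$ with $(h_1,\ldots,h_k)$ a complete intersection, forcing $\codim(S/M_G)\ge k$, and then Theorem~\ref{Theorem 3} yields $\chi(G)=\codim(S/M_G)\ge k$. So the entire content of the conjecture is the reverse inequality $\chi(G)\le k$, i.e.\ that $G$ admits a proper $k$-coloring.

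By Theorem~\ref{Theorem 3} again, proving $\chi(G)\le k$ is the same as exhibiting a realization of $\codim(S/M_G)$ of size $k$: a set $\{x_{\omega_1},\ldots,x_{\omega_k}\}$ with each $\omega_i$ a maximal independent set, such that every generator $m_v$ of $M_G$ is divisible by some $x_{\omega_i}$ --- equivalently, $k$ independent sets whose union is $V$, which (after the disjointification step used in the proof of Theorem~\ref{Theorem 3}) is a partition of $V$ into $k$ independent sets. So the plan is to build such a partition directly, colouring one clique at a time: list the cliques as $K(1),\ldots,K(k)$, fix a bijection $c_1\colon K(1)\to\{0,\ldots,k-1\}$ arbitrarily, and then for $i=2,3,\ldots$ choose a bijection $c_i\colon K(i)\to\{0,\ldots,k-1\}$ agreeing with the colours already assigned on the vertices of $K(i)$ shared with earlier cliques; those shared vertices are pairwise distinct and at most $i-1$ in number, so at most $k-1$ of the $k$ vertices of $K(i)$ are pre-coloured (with distinct colours), leaving room to extend, and vertices in one clique or in none are then free.

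The obstruction is exactly the point where this naive scheme breaks: a vertex $v\in K(i)\cap K(j)$ with $i,j$ both small may already have been assigned incompatible colours by the time a later clique is processed, because different cliques impose their constraints in different orders; no choice of processing order is known to avoid this in general. In other words, the reverse inequality $\chi(G)\le k$ is precisely the Erdős--Faber--Lovász conjecture itself, and the greedy/inductive plan above does not close. The strongest unconditional progress is asymptotic --- Kahn's semi-random ("nibble") argument gives $\chi(G)\le k+o(k)$ --- together with special cases such as bounded vertex-degree or small $k$; a complete proof is beyond the scope of this paper.

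What the present framework contributes is a clean algebraic restatement against which the conjecture can be tested on concrete classes of graphs. Combining Theorem~\ref{Theorem 3} with Theorem~\ref{Theorem 5.6}, the conjecture for such a graph $G$ is equivalent to the purely algebraic assertion $\codim(S/M_G)=k$, and also to $P_G(k)=k!\,\e(S/M_G)$ with this multiplicity nonzero; the latter reduces verification to a multiplicity computation on a monomial ideal, which Corollary~\ref{C} makes explicit. Thus, rather than resolving the conjecture, the role of this section is to observe that every graph arising in it satisfies the hypotheses of Theorem~\ref{Theorem 5.6}, so that the chromatic-polynomial formula of that theorem applies verbatim to the entire Erdős--Faber--Lovász family.
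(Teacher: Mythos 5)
This statement is the Erdős--Faber--Lovász conjecture itself; the paper states it \emph{as a conjecture} and offers no proof, so there is nothing to compare your argument against, and your decision not to manufacture one is the correct response. Your treatment of the easy half is right: $G$ contains a $k$-clique, so $\chi(G)\ge k$, which via Proposition~\ref{PropCI} and Theorem~\ref{Theorem 3} is the same as $\codim(S/M_G)\ge k$; and your diagnosis of where the greedy extension scheme breaks (earlier cliques pre-colouring a later clique inconsistently) is exactly why the reverse inequality is the open content, of which the paper proves only the special cases in Theorems~\ref{hindman} and~\ref{Theorem 4}. One small caution about your closing equivalences: $\e(S/M_G)$ counts realizations of the \emph{actual} codimension of $M_G$, which equals $\chi(G)$, so the reformulation ``$P_G(k)=k!\,\e(S/M_G)$ with the multiplicity nonzero'' already presupposes $\chi(G)=k$ and is not an independent restatement of the conjecture; the clean algebraic equivalent is just $\codim(S/M_G)=k$.
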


One particular case where the Erdős-Faber-Lovász conjecture is known to hold is due to N. Hindman, and states that the conjecture is true if no vertex belongs to more than two of the $k$-cliques [Hi]. In order to have concrete instances where the formula of Corollary \ref{C} can be verified, we will consider this case and will use it to prove a related result. The next theorem is discussed in [Hi, Section 1].

\begin{theorem} \label{hindman}
Suppose that $G$ is a graph obtained as the union of $k$ $k$-cliques, having the following properties:
\begin{enumerate}[(i)]
\item the $k$-cliques intersect pairwise in at most one vertex,
\item every vertex of $G$ is shared by at most 2 of the $k$-cliques.
\end{enumerate}
Then, $\chi(G)=k$.
\end{theorem}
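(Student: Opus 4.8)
The plan is to prove the two inequalities $\chi(G)\ge k$ and $\chi(G)\le k$ separately. The lower bound is immediate: by hypothesis $G$ contains a $k$-clique as a subgraph, and a $k$-clique requires $k$ colors, so $\chi(G)\ge k$. The entire content is therefore in producing an explicit $k$-coloring of $G$, and my approach is to collapse this to a classical edge-coloring statement.

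For the upper bound, I would first classify the vertices of $G$. Writing $K_1,\dots,K_k$ for the given $k$-cliques, hypothesis (ii) together with $V(G)=\bigcup_i V(K_i)$ says that every vertex of $G$ lies in exactly one or exactly two of the $K_i$; call these \emph{private} and \emph{shared} vertices respectively. The key observation is that it suffices to color the shared vertices so that, within each $K_i$, the shared vertices receive pairwise distinct colors: the colors of $\{0,\dots,k-1\}$ left unused by the shared vertices of $K_i$ can then be distributed bijectively among the private vertices of $K_i$, done independently for each $i$. This causes no conflict because a private vertex belongs to no other clique, and — since $E(G)=\bigcup_i E(K_i)$ — every edge of $G$ lies inside some single clique $K_i$, on which the proposed coloring is by construction a bijection onto the color set.

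Next I would encode the shared vertices as the edges of an auxiliary graph $H$ on vertex set $\{1,\dots,k\}$: for each shared vertex $v\in K_i\cap K_j$ put an edge $\{i,j\}$ in $H$. Hypothesis (i) forces $|K_i\cap K_j|\le 1$, so distinct shared vertices produce distinct edges and $H$ is a \emph{simple} graph on $k$ vertices. Under this dictionary, two shared vertices lie in a common clique $K_i$ precisely when the corresponding edges of $H$ are incident at $i$, so a coloring of the shared vertices that is proper inside every clique is exactly a proper edge-coloring of $H$. Since $H$ has only $k$ vertices, $\Delta(H)\le k-1$, and Vizing's theorem gives $\chi'(H)\le\Delta(H)+1\le k$. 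Pulling back a proper $k$-edge-coloring of $H$ to the shared vertices, and then filling in the private vertices as described above, yields a $k$-coloring of $G$, so $\chi(G)\le k$.

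I expect the main obstacle to be conceptual rather than computational: recognizing that the coloring problem localizes to the shared vertices, and that this residual problem is literally edge-coloring of a $k$-vertex graph, where Vizing's bound $\Delta+1$ happens to land exactly on $k$. Once that translation is set up, the remaining points — that $H$ is simple, that the private-vertex fill-in is well-defined and conflict-free, and that every edge of $G$ lives inside one clique — are routine verifications.
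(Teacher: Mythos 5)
Your proof is correct, and its skeleton is exactly the one the paper relies on: the paper does not reprove Theorem \ref{hindman} (it cites Hindman), but the argument it runs for the companion result, Theorem \ref{Theorem 4}, is the same two-stage reduction you describe --- first show that any proper coloring of the shared vertices extends clique-by-clique via bijections $K(i)\to\{0,\ldots,k-1\}$ (your ``private-vertex fill-in''), then observe that coloring the shared vertices amounts to properly edge-coloring a simple graph on the $k$ clique indices. The only place you diverge is the last step: you invoke Vizing's theorem to get $\chi'(H)\le\Delta(H)+1\le k$, whereas the paper (following Hindman) uses the explicit coloring of Lemma \ref{Lemma 1}, $\{i,j\}\mapsto i+j \bmod k$, whose color classes are matchings of the complete graph on $\{1,\ldots,k\}$ and hence restrict to a proper $k$-edge-coloring of any subgraph $H$. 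Both close the gap; Vizing is a heavier black box but needs no computation, while the modular coloring is elementary, fully constructive, and is the version that generalizes (Lemma \ref{Lemma 2}) to $k+1$ cliques when $k$ is odd, which is what the paper actually needs for Theorem \ref{Theorem 4}. One small point worth making explicit in your write-up: since each $K_i$ meets the other $k-1$ cliques in at most one vertex each, $K_i$ has at most $k-1$ shared vertices, and because $|K_i|=k$ equals the number of colors, giving the shared vertices of $K_i$ distinct colors leaves exactly as many unused colors as there are private vertices, so the ``bijective distribution'' is indeed possible.
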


We will show that when $k$ is odd a result slightly stronger than Theorem \ref{hindman} holds. To prove this fact, we need to introduce new notation, and to prove two lemmas.\\

\textit{Notation}: For each integer $r$, let $\mathscr{F}_r = \{\{i,j\}: 1\leq i<j\leq r\}$.

\begin{lemma}\label{Lemma 1}
Let $k$ be an integer. Let $f:\mathscr{F}_k\rightarrow \{0,\ldots,k-1\}$ be defined by $f(\{i,j\}) = u$, where $i+j\equiv u \mod k$. Then, for each $0\leq u\leq k-1$, the elements of $f^{-1}(u)$ are disjoint.
\end{lemma}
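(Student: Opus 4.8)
The statement asks us to show that for the colouring $f$ of the edge set $\mathscr{F}_k$ of the complete graph on $\{1,\dots,k\}$ given by $f(\{i,j\}) \equiv i+j \pmod k$, each colour class $f^{-1}(u)$ is a matching, i.e. consists of pairwise disjoint edges. The plan is to argue directly: fix a colour $u$ and suppose, for contradiction, that two distinct edges in $f^{-1}(u)$ share a vertex. Write these edges as $\{i,j\}$ and $\{i,\ell\}$ with $j \neq \ell$, both lying in $\{1,\dots,k\}$. From $f(\{i,j\}) = f(\{i,\ell\}) = u$ we get $i+j \equiv i+\ell \pmod k$, hence $j \equiv \ell \pmod k$. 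Since $j,\ell$ both lie in the range $1 \le j,\ell \le k$ and $j \ne \ell$, the congruence $j \equiv \ell \pmod k$ forces $|j - \ell|$ to be a nonzero multiple of $k$; but $|j-\ell| \le k-1$, a contradiction. Therefore no two edges of $f^{-1}(u)$ are incident, which is exactly the assertion.

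I would present this in essentially the form above, perhaps phrasing it without contradiction: if $\{i,j\}$ and $\{i,\ell\}$ both receive colour $u$, then $j \equiv \ell \pmod k$, and since $j,\ell \in \{1,\dots,k\}$ this already gives $j = \ell$, so the two edges coincide. Hence distinct edges of $f^{-1}(u)$ have no common endpoint. The only mild subtlety to state carefully is the indexing convention: $\mathscr{F}_k$ is defined with entries $1 \le i < j \le k$, so the "residues" $i+j \bmod k$ are well defined in $\{0,\dots,k-1\}$, and crucially each label in $\{1,\dots,k\}$ hits a distinct residue class (the map $x \mapsto x \bmod k$ is injective on $\{1,\dots,k\}$), which is the fact doing all the work.

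There is really no substantial obstacle here; the lemma is a one-line modular-arithmetic observation, included as a stepping stone toward the odd-$k$ strengthening of Theorem \ref{hindman}. The only thing worth double-checking is whether the statement is needed for all integers $k$ or only $k$ with a parity restriction — the hypothesis only says "let $k$ be an integer", and indeed the argument above uses nothing about $k$ beyond $k \ge 1$, so the full generality is immediate. If anything, I would be slightly more careful to note that $f^{-1}(u)$ could be empty for some $u$ (when $k$ is such that the residue $u$ is not attained, though in fact for $k \ge 2$ every residue is attained by some pair), in which case the disjointness claim is vacuous.
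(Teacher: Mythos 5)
Your argument is correct and is essentially identical to the paper's proof: both take two colour-$u$ edges sharing a vertex, subtract the congruences to get $k \mid \beta - \gamma$, and conclude $\beta = \gamma$ since the labels lie in $\{1,\dots,k\}$. Your added remarks about the indexing convention and the vacuous case are fine but not needed.
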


\begin{proof}
Suppose that $f(\{i,j\}) = f(\{s,t\}) = u$. If $\{i,j\}$ and $\{s,t\}$ have an element in common, then they must be of the form $\{\alpha,\beta\}$ and $\{\alpha,\gamma\}$, respectively. Then 
$k\mid (\alpha+\beta) - (\alpha+\gamma) = \beta - \gamma$. It follows that $\beta = \gamma$, and $\{i,j\} = \{\alpha,\beta\} = \{s,t\}$. 
\end{proof}

\begin{lemma}\label{Lemma 2}
Let $k$ be odd. Let $h:\mathscr{F}_{k+1}\rightarrow \{0,\ldots,k-1\}$ be defined by
\[h(\{s,t\}) = 
\begin{cases}
(s+t)\mod k &\text{ if } \{s,t\}\in\mathscr{F}_k\\
2s    & \text{ if } 1\leq s\leq \dfrac{k-1}{2} \text{ and } t = k+1\\
2s-k & \text{ if } \dfrac{k-1}{2}< s\leq k-1 \text{ and } t=k+1\\
0 & \text{ if } s=k \text{ and } t=k+1.
\end{cases}\]
Then, for each $0\leq u\leq k$, the elements of $h^{-1}(u)$ are pairwise disjoint.
\end{lemma}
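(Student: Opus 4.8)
The plan is to recognize $h$ as the classical $1$-factorization (round-robin) edge coloring of the $(k+1)$-clique $K_{k+1}$: the vertices $1,\dots,k$ sit on a circle while vertex $k+1$ is a ``center'', an edge $\{i,j\}$ with $i,j\le k$ receives the color $i+j\bmod k$, and a spoke $\{s,k+1\}$ receives the color $2s\bmod k$. The assertion that each $h^{-1}(u)$ is pairwise disjoint is exactly the statement that each color class is a matching. First I would note that, since $k$ is odd, $2$ is a unit modulo $k$, so the map $s\mapsto 2s\bmod k$ is a bijection of $\mathbb{Z}/k\mathbb{Z}$. Checking the three subcases of the definition ($2\le 2s\le k-1$, then $k+1\le 2s\le 2k-2$, then $2s=2k$), one sees that $h(\{s,k+1\}) = 2s\bmod k$ for every $1\le s\le k$; in particular the colors appearing on the spokes are precisely $0,1,\dots,k-1$, each exactly once, so $h^{-1}(u)$ contains at most one spoke.

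Next I would dispose of the edges lying in $\mathscr{F}_k$. On $\mathscr{F}_k$ the function $h$ coincides with the map $f$ of Lemma \ref{Lemma 1} (with the same parameter $k$), so Lemma \ref{Lemma 1} already yields that for each color $u$ the edges of $h^{-1}(u)\cap\mathscr{F}_k$ are pairwise disjoint. It remains only to incorporate the unique spoke of color $u$. So fix $u$, let $s_u$ be the unique element of $\{1,\dots,k\}$ with $2s_u\equiv u\pmod k$, and note $h^{-1}(u)\setminus\mathscr{F}_k = \{\{s_u,k+1\}\}$.

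The last step is to show $\{s_u,k+1\}$ is disjoint from every edge of $h^{-1}(u)\cap\mathscr{F}_k$; since $k+1$ is not an endpoint of any edge of $\mathscr{F}_k$, this reduces to showing $s_u$ is not an endpoint of any such edge. If some $\{s_u,j\}\in\mathscr{F}_k$ had color $u$, then $s_u+j\equiv u\equiv 2s_u\pmod k$, hence $j\equiv s_u\pmod k$; as $j,s_u\in\{1,\dots,k\}$ this forces $j=s_u$, contradicting that $\{s_u,j\}$ is a two-element set. Combining this with Lemma \ref{Lemma 1}, every pair of edges in $h^{-1}(u)$ is disjoint, which is the desired conclusion. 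I do not anticipate a genuine obstacle; the only points requiring care are verifying that the piecewise formula on the spokes really collapses to $2s\bmod k$ (this is exactly where oddness of $k$ enters) and the bookkeeping that each color is used by at most one spoke.
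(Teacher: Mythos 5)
Your proof is correct and follows essentially the same route as the paper: reduce to Lemma \ref{Lemma 1} for the edges inside $\mathscr{F}_k$, then show the unique spoke $\{s_u,k+1\}$ of color $u$ meets no edge of $f^{-1}(u)$ via the congruence $s_u+j\equiv 2s_u \Rightarrow j=s_u$. The only difference is organizational: the paper runs the same argument as a three-way case split on the color $u$ (even, odd, zero), whereas you collapse the piecewise spoke formula into the single observation that $s\mapsto 2s\bmod k$ is a bijection because $k$ is odd, which is a cleaner packaging of the identical idea.
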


\begin{proof}
 Let $f:\mathscr{F}_k\rightarrow \{0,\ldots,k-1\}$ be defined by $f(\{s,t\}) = u$, where $s+t\equiv u \mod k$.\\
Notice that $\{0,\ldots,k-1\}$ is the disjoint union of the sets $A=\{u: u\text{ is even and }2\leq u\leq k-1\}$, $B=\{u: u\text{ is odd and }1\leq u\leq k-2\}$, and $C=\{0\}$. Thus, the lemma can be proven by considering the following 3 cases.

\underline{Case 1}:  for each $u\in A$, the elements of $h^{-1}(u)$ are disjoint.\\
Since $A$ can be represented in the form $A=\left\{2s:1\leq s\leq \dfrac{k-1}{2}\right\}$, it is enough to show that for each $1\leq s\leq \dfrac{k-1}{2}$, the elements of $h^{-1}(2s)$ are disjoint. Note that $h^{-1}(2s)=f^{-1}(2s)\cup \{\{s,k+1\}\}$ and, by Lemma \ref{Lemma 1}, the elements of $f^{-1}(2s)$ are disjoint. Hence, this case reduces to showing that $\{s,k+1\} \cap \{i,j\}=\varnothing$, for all $\{i,j\} \in f^{-1}(2s)$. Suppose not. Then $f^{-1}(2s)$ must contain an element of the form $\{s,\gamma\}$. Since $f(\{s,\gamma\}) = 2s$, we must have that $k\mid (s+\gamma)-2s = \gamma-s$. It follows that $\gamma = s$, and $\{s,\gamma\} = \{s,s\}$, a contradiction.\\ 

\underline{Case 2}:  for each $u\in B$, the elements of $h^{-1}(u)$ are disjoint.\\
Since $B$ can be represented in the form $B=\{2s-k:\dfrac{k-1}{2}< s\leq k-1\}$, it is enough to show that for each $\dfrac{k-1}{2}< s\leq k-1$, the elements of $h^{-1}(2s-k)$ are disjoint. Note that $h^{-1}(2s-k)=f^{-1}(2s-k)\cup \{\{s,k+1\}\}$ and, by Lemma \ref{Lemma 1}, the elements of $f^{-1}(2s-k)$ are disjoint. Hence, this case reduces to showing that $\{s,k+1\} \cap \{i,j\}=\varnothing$, for all $\{i,j\} \in f^{-1}(2s-k)$. Suppose not. Then $f^{-1}(2s-k)$ must contain an element of the form $\{s,\gamma\}$. Since $f(\{s,\gamma\}) = 2s-k$, we must have that $k\mid (s+\gamma)-(2s-k) = \gamma-s+k$. It follows that $\gamma = s$, and $\{s,\gamma\} = \{s,s\}$, a contradiction.\\ 

\underline{Case 3}:  the elements of $h^{-1}(0)$ are disjoint.\\
Note that $h^{-1}(0)=f^{-1}(0)\cup \{\{k,k+1\}\}$ and, by Lemma \ref{Lemma 1}, the elements of $f^{-1}(0)$ are disjoint. Hence, this case reduces to showing that $\{k,k+1\} \cap \{i,j\}=\varnothing$, for all $\{i,j\} \in f^{-1}(0)$. Suppose not. Then $f^{-1}(0)$ must contain an element of the form $\{k,\gamma\}$. Since $f(\{k,\gamma\}) = 0$, we must have that $k\mid (k+\gamma)$. It follows that $\gamma = k$, and $\{k,\gamma\} = \{k,k\}$, a contradiction.\\ 

\end{proof}

\begin{example}
Let $k=7$. The functions $f$ and $h$, defined by Lemmas \ref{Lemma 1} and \ref{Lemma 2}, are
\[
\begin{array}{cccccccc}
     &   &       &    &   &    &    &  \\
     &\vline\;34\;\vline&\vline\;35\;\vline&\vline\;45\;\vline&\vline\;46\;\vline&\vline\;56\;\vline&\vline\;57\;\vline&\vline\;67\;\vline\\
     &\vline\;25\;\vline&\vline\;26\;\vline&\vline\;36\;\vline&\vline\;37\;\vline&\vline\;47\;\vline&\vline\;23\;\vline&\vline\;24\;\vline\\
     &\vline\;16\;\vline&\vline\;17\;\vline&\vline\;27\;\vline&\vline\;12\;\vline&\vline\;13\;\vline&\vline\;14\;\vline&\vline\;15\;\vline\\
f    &    \downarrow&\downarrow&\downarrow&\downarrow&\downarrow&\downarrow&\downarrow\\
  &  0               &1               &2                  &3              &  4               &   5            &   6            
\end{array}
\begin{array}{cccccccc}
      &\vline\;78\;\vline   &\vline\;48\;\vline    &\vline\;18\;\vline    &\vline\;58\; \vline &\vline\;28 \;\vline &\vline  \;68 \;\vline &\vline \;38\;\vline\\
     &\vline\;34\;\vline&\vline\;35\;\vline&\vline\;45\;\vline&\vline\;46\;\vline&\vline\;56\;\vline&\vline\;57\;\vline&\vline\;67\;\vline\\
     &\vline\;25\;\vline&\vline\;26\;\vline&\vline\;36\;\vline&\vline\;37\;\vline&\vline\;47\;\vline&\vline\;23\;\vline&\vline\;24\;\vline\\
     &\vline\;16\;\vline&\vline\;17\;\vline&\vline\;27\;\vline&\vline\;12\;\vline&\vline\;13\;\vline&\vline\;14\;\vline&\vline\;15\;\vline\\
h    &    \downarrow&\downarrow&\downarrow&\downarrow&\downarrow&\downarrow&\downarrow\\
  &  0               &1               &2                  &3              &  4               &   5            &   6             
\end{array}
\]

\end{example} 

\textit{Note}: Lemma \ref{Lemma 2} is false when $k$ is even. In fact, when $k=2$, $\mathscr{F}_3 = \{\{1,2\},\{1,3\},\{2,3\}\}$ and it is impossible to define a function $h$ as in Lemma \ref{Lemma 2}.\\

The next theorem, which is a strengthened version of Theorem \ref{hindman} when $k$ is odd, is proven with the same line of reasoning that Hindman used in [Hi, Section 1].

\begin{theorem}\label{Theorem 4}
Let $k$ be odd. Suppose that $G$ is a graph obtained as the union of $k+1$ $k$-cliques, having the following properties:
\begin{enumerate}[(i)]
\item the $k$-cliques intersect pairwise in at most one vertex,
\item every vertex of $G$ is shared by at most $2$ of the $k$-cliques.
\end{enumerate}
Then $\chi(G) = k$.
\end{theorem}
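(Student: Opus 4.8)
The plan is to mimic Hindman's argument by exhibiting an explicit proper $k$-coloring of $G$, using the combinatorial scaffolding provided by Lemmas \ref{Lemma 1} and \ref{Lemma 2}. First I would observe that $\chi(G) \geq k$ trivially, since $G$ contains a $k$-clique; so the entire content is the upper bound $\chi(G) \leq k$. Label the $k+1$ cliques $K(1), \ldots, K(k+1)$. The key structural fact, coming from hypotheses (i) and (ii), is that the ``shared vertices'' of $G$ are in bijective correspondence with a subset of the pairs $\{i,j\}$ with $1 \leq i < j \leq k+1$: each shared vertex lies in exactly two cliques $K(i)$ and $K(j)$, and by (i) at most one vertex is shared by any given pair. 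So I would encode a shared vertex as the pair $\{i,j\} \in \mathscr{F}_{k+1}$ of clique-indices it belongs to, while the remaining (non-shared) vertices lie in a unique clique.

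Next I would construct the coloring. Assign to the shared vertex corresponding to $\{i,j\}$ the color $h(\{i,j\}) \in \{0,\ldots,k-1\}$, where $h$ is the function of Lemma \ref{Lemma 2} (this is where oddness of $k$ is used — Lemma \ref{Lemma 2} requires it, and the Note after it shows why). Then I need to color the non-shared vertices of each clique $K(j)$. Fix $j$; the shared vertices lying in $K(j)$ have clique-pairs of the form $\{i,j\}$ for various $i \neq j$, and by Lemma \ref{Lemma 2} applied within $K(j)$ (or rather, using that $h$ restricted to pairs containing a fixed index is injective — which follows from the disjointness statement of the lemma reinterpreted: two pairs $\{i,j\}$, $\{i',j\}$ share the element $j$, hence cannot have the same $h$-value), these shared vertices all receive distinct colors. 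Since $K(j)$ is a $k$-clique (exactly $k$ vertices) and we have $k$ colors available, the shared vertices occupy some subset of the color set, and I extend to a bijection between the $k$ vertices of $K(j)$ and $\{0,\ldots,k-1\}$ arbitrarily on the non-shared vertices. This makes each $K(j)$ properly colored with all $k$ colors used exactly once.

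The main obstacle — and the step deserving the most care — is checking well-definedness and properness simultaneously at the shared vertices. A shared vertex $v = \{i,j\}$ gets its color from $h$, but it is also being ``colored'' as a vertex of $K(i)$ and as a vertex of $K(j)$; I must confirm these are consistent (they are, since I define the color once, via $h$) and that within each clique the induced coloring is genuinely a bijection onto $\{0,\ldots,k-1\}$. The crux is: within $K(j)$, do any two shared vertices collide? Two shared vertices in $K(j)$ correspond to pairs $\{i,j\}$ and $\{i',j\}$ with $i \neq i'$; these two pairs intersect (in $j$), so Lemma \ref{Lemma 2} — which asserts that elements of a common fiber $h^{-1}(u)$ are pairwise \emph{disjoint} — forces $h(\{i,j\}) \neq h(\{i',j\})$. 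Hence no collision, the shared colors in $K(j)$ are distinct, and the extension to the remaining vertices completes a proper coloring. Finally, a vertex belonging to only one clique is properly colored by construction, and any edge of $G$ lies inside some $K(j)$ (edges only occur within cliques), so the coloring is proper on all of $G$. Therefore $\chi(G) \leq k$, and combined with $\chi(G) \geq k$ we conclude $\chi(G) = k$.
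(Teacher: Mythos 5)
Your proposal is correct and follows essentially the same route as the paper: color each shared vertex by $h$ applied to its pair of clique-indices, use the pairwise disjointness of the fibers of $h$ (Lemma \ref{Lemma 2}) to see that no two shared vertices in a common clique collide, and then extend to a bijection $K(j)\rightarrow\{0,\ldots,k-1\}$ on each clique, noting that every edge lies inside some $K(j)$. Making the lower bound $\chi(G)\geq k$ explicit is a small but welcome addition the paper leaves implicit.
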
 

\begin{proof}
Denote by $K(1),\ldots,K(k+1)$ the $k+1$ $k$-cliques stated by the theorem. Let $G=(V,E)$. Consider the set $V'\subseteq V$ of vertices that are shared by exactly 2 of the $K(i)$, and let $G'$ be the induced subgraph of $G$ on the vertex set $V'$. Suppose that there is a $k$-coloring $f:V'\rightarrow \{0,\ldots,k-1\}$ of $G'$. Then $f$ can be extended to a $k$-coloring of $G$ as follows. For each $i=1,\ldots,k+1$, define a bijection $g_i:K(i)\rightarrow \{0,\ldots,k-1\}$ such that $g_i(v)=f(v)$, for all $v \in V' \cap K(i)$. (Since $f$ is a $k$-coloring of $V'$, $f$ is one-to-one in $V' \cap K(i)$. Thus, the bijection $g_i$ actually exists.) Now, let $g:V\rightarrow \{0,\ldots,k-1\}$ be the (only) function whose restriction to $K(i)$ equals $g_i$. Let $\{v_1,v_2\} \in E$. Then, $v_1, v_2 \in K(i)$, for some $i$. It follows that $g(v_1)=g_i(v_1)\neq g_i(v_2)=g(v_2)$, which shows that $g$ is a $k$-coloring of $G$.

Thus, the theorem will be proven if we can construct a $k$-coloring $f:V'\rightarrow \{0,\ldots,k-1\}$ of $G'$. Let $h$ be the function defined in Lemma \ref{Lemma 2}, and consider the function $h':V'\rightarrow \mathscr{F}_{k+1}$, defined by the following rule: if $v \in K(i) \cap K(j)$, then $h'(v)=\{i,j\}$. Let $f=h \circ h'$, and suppose that $f(v_1)=f(v_2)$, for distinct vertices $v_1$, $v_2$ of $V'$. Denote $h'(v_1)=\{i,j\}$, and $h'(v_2)=\{r,s\}$. Then $h(\{i,j\})= h(\{r,s\})$. By Lemma \ref{Lemma 2}, $\{i,j\} \cap \{r,s\}= \varnothing$, which means that $v_1 \in K(i) \cap K(j)$,  $v_2 \in K(r) \cap K(s)$, and $\{K(i),K(j)\} \cap \{K(r),K(s)\}= \varnothing$. By property (ii), $v_1$ and $v_2$ cannot be vertices of the same $k$-clique, which implies that $v_1$ and $v_2$ are not adjacent. This proves that $f$ is a $k$-coloring of $G'$.
\end{proof}

\textit{Note}: Theorem \ref{Theorem 4} does not hold when $k$ is even, the simplest counterexample being $k=2$, and $G = \left(V=\{x_1,x_2,x_3\},E=\left\{\{x_1,x_2\},\{x_1,x_3\},\{x_2,x_3\}\right\}\right)$.\\

The next example illustrates Theorem \ref{Theorem 4} by giving a $k$-coloring of a graph $G$, explicitly.

\begin{example}
Let $G$ be the union of the following $4$ $3$-cliques:
\[
\begin{tikzpicture}
\draw (0,0) --(2,0)-- (2,3)--(0,3)--(0,0);
        \fill (0,0) circle [radius=2pt];
     \fill (2,0)  circle [radius=2pt];
      \fill (2,3)  circle [radius=2pt];
       \fill (0,3)  circle [radius=2pt];
\draw (2,0) -- (3,1.5);
\fill (3,1.5)  circle [radius=2pt];
\draw (3,1.5) -- (2,3);
\draw (0,0)--(2,3);
\draw (2,0)--(0,3);
\fill (intersection of 0,0--2,3 and 2,0--0,3) circle (2pt);
\draw [ dotted ](2.3,1.5) ellipse (1 cm and 2cm);
\node at (3.5,3){$K(3)$};
\node at (1,4){$K(2)$};
\node at (-1.5,3) {$K(1)$};
\node at (1,-1.1){$K(4)$};
\draw [ dotted ](1,2.5) circle (1.3);
\draw (0,0)--(-1,1.5)--(0,3);
\fill (-1,1.5) circle [radius=2pt];
\draw [ dotted ](-0.3,1.5) ellipse (1 cm and 2cm);
\draw [ dotted ](1,0.5) circle (1.3 cm );
\end{tikzpicture}\]
With the notation of Theorem \ref{Theorem 4}, $G'$ is the graph
\[
\begin{tikzpicture}
\draw (0,0) rectangle (2,3);
 \fill (0,0) circle [radius=2pt];
     \fill (2,0)  circle [radius=2pt];
      \fill (2,3)  circle [radius=2pt];
       \fill (0,3)  circle [radius=2pt];
       \draw (0,0)--(2,3);
       \draw (2,0)--(0,3);
       \fill (intersection of 0,0--2,3 and 2,0--0,3) circle (2pt);
       \node at (1.55,1.45) {$\{2,4\}$};
       \node at (-0.5,-0.3) {$\{1,4\}$};
       \node at (-0.5,3.3) {$\{1,2\}$};
       \node at (2.5,-0.3) {$\{3,4\}$};
       \node at (2.5,3.3) {$\{2,3\}$};
              \node at (2.5,-0.3) {$\{3,4\}$};
\end{tikzpicture}
\]
(Here we depart from the standard notation, and denote vertices with $2$-element sets.) By Lemma \ref{Lemma 2}, the following rule defines a $k$-coloring of $G'$.
\[
\begin{array}{cccccc}
  &\{3,4\}&\vline&\{2,4\}&\vline&\{1,4\}\\
  &\{1,2\}&\vline&\{1,3\}&\vline&\{2,3\}\\
h&\downarrow&&\downarrow&&\downarrow\\
  &0&&1&&2
\end{array}\]
Finally, each clique $K(i)$ of $G$ can be $3$-colored as follows. Color the vertices of $K(i)$ that are in $G'$, as the function $h$ indicates. If $A_i$ is the set of all integers used to color these vertices, then color the remaining vertices of $K(i)$ with the integers of $\{0,1,2\}\setminus A_i$, arbitrarily. Thus, $G$ can be $3$-colored in the following fashion. 
\[
\begin{tikzpicture}
\draw (0,0) rectangle (2,3);
 \fill (0,0) circle [radius=2pt];
     \fill (2,0)  circle [radius=2pt];
      \fill (2,3)  circle [radius=2pt];
       \fill (0,3)  circle [radius=2pt];
       \draw (0,0)--(2,3);
       \draw (2,0)--(0,3);
       \fill (intersection of 0,0--2,3 and 2,0--0,3) circle (2pt);
       \node at (1.3,1.5) {$1$};
       \node at (-0.2,-0.3) {$2$};
       \node at (-0.2,3.3) {$0$};
       \node at (2.2,-0.3) {$0$};
       \node at (2.2,3.3) {$2$};
       \draw (2,0) -- (3,1.5);
\fill (3,1.5)  circle [radius=2pt];
\draw (3,1.5) -- (2,3);
\draw (0,0)--(-1,1.5)--(0,3);
\fill (-1,1.5) circle [radius=2pt];
\node at (-1.2,1.45) {$1$};
\node at (3.2,1.45) {$1$};
\end{tikzpicture}
\]

\end{example}

We close this section applying the formula of Theorem \ref{Theorem 5.6} to two graphs.

\begin{example}
(i) Consider the $3$-cliques $K(1)=\left(\{1,2,3\}, \{\{1,2\},\{1,3\},\{2,3\}\}\right)$, $K(2)=\left(\{3,4,5\}, \{\{3,4\},\{3,5\},\{4,5\}\}\right)$, and $K(3)=\left(\{1,5,6\}, \{\{1,5\},\{1,6\},\{5,6\}\}\right)$. Let $G=K(1)\cup K(2)\cup K(3)$. 
\[\begin{array}{c}
\begin{tikzpicture}
\foreach \a in {3}{
\node[regular polygon, regular polygon sides=\a, minimum size=3cm, draw]  (A) {};
    \foreach \i in {1}
    \node[circle, label=above:{4}, fill=black, inner sep=1pt] at (A.corner \i) {};
    \foreach \i in {2}
    \node[circle, label=below:{2}, fill=black, inner sep=1pt] at (A.corner \i) {};
     \foreach \i in {3}
    \node[circle, label=below:{6}, fill=black, inner sep=1pt] at (A.corner \i) {};
    \draw (A.side 1)--(A.side 3);
    \node[circle, label=left:{3}, fill=black, inner sep=1pt] at (A.side 1) {};
    \draw (A.side 1)--(A.side 2);
    \node[circle, label=below:{1}, fill=black, inner sep=1pt] at (A.side 2) {};
    \draw (A.side 2)--(A.side 3);
    \node[circle, label=right:{5}, fill=black, inner sep=1pt] at (A.side 3) {};
   }
\end{tikzpicture}\\
G=K(1)\cup K(2)\cup K(3)
\end{array}\]

 Notice that any 2 of $K(1)$, $K(2)$, $K(3)$ intersect in exactly one vertex. Thus, the graph $G = K(1)\cup K(2)\cup K(3)$ satisfies the hypotheses of the Erdős-Faber-Lovász conjecture. Moreover, each vertex of $G$ belongs to no more than $2$ of $K(1)$, $K(2)$, $K(3)$. By Theorem \ref{hindman}, $\chi(G)=3$.

We will compute $P_3(G)$ in two different ways. First, note that the function $f:\{1,\ldots,6\}\rightarrow \{0,1,2\}$,defined by $f(1) = f(4)=0$; $f(2) =f(5)=1$; $f(3)=f(6)=2$ is a $3$-coloring. Also, note that in any $3$-coloring $g:\{1,\ldots,6\}\rightarrow \{0,1,2\}$, $g(1)\ne g(3)$, $g(3)\ne g(5)$, and $g(1)\ne g(5)$ because the vertices $1,3,5$ are pairwise adjacent. It follows that 
$\{g(1),g(3),g(5)\}=\{0,1,2\}$, and hence, $g(2),g(4),g(6)\in \{g(1),g(3),g(5)\}$. Now, since $2$ is adjacent to $1$ and $3$, we must have $g(2) = g(5)$; since $4$ is adjacent to $3$ and $5$, $g(4)=g(1)$; and since $6$ is adjacent to $1$ and $5$, $g(6) = g(3)$. This implies that the $3$-colorings $f$ and $g$ have the same configuration. Thus, up to permutations, there is only one $3$-coloring of $G$, and $P_G(3) = 3! = 6$.

Now we will compute $P_G(3)$ using Theorem \ref{Theorem 5.6}. It can be verified that the class of maximal independent sets is given by $\Omega = \left\{\{2,4,6\},\{1,4\},\{2,5\},\{3,6\}\right\}$. Denote by $l_i$ the monomial associated to the vertex $i$. Then 
\[\begin{array}{lll}
l_1 = x_{\{1\}}x_{\{1,4\}};& l_2 = x_{\{2\}}x_{\{2,4,6\}}x_{\{2,5\}}; &l_3 = x_{\{3\}}x_{\{3,6\}};\\
 l_4 = x_{\{4\}}x_{\{1,4\}}x_{\{2,4,6\}};& l_5=x_{\{5\}}x_{\{2,5\}};&l_6=x_{\{6\}}x_{\{2,4,6\}}x_{\{3,6\}}.
 \end{array}\]
 Define $m_1=l_4$ $m_2=l_5$, $m_3=l_6$ and $h_1 = l_1$,  $h_2=l_2$, $h_3 = l_3$. Then $M_G$ can be expressed in the form:\\
 $
M_G=(m_1 = x_{\{4\}}x_{\{1,4\}}x_{\{2,4,6\}}, \quad m_2 = x_{\{5\}}x_{\{2,5\}}, \quad m_3 =x_{\{6\}}x_{\{2,4,6\}}x_{\{3,6\}},$\\
 $h_1 = x_{\{1\}}x_{\{1,4\}},\quad
  h_2=x_{\{2\}}x_{\{2,4,6\}}x_{\{2,5\}},\quad h_3= x_{\{3\}}x_{\{3,6\}})$.\\
  By Theorem \ref{MonomialMult},
 \begin{align*}
 \e(S/M_G) &= \prod\limits_{i=1}^3 \deg(h_i) - \prod\limits_{i=1}^3 \deg\left(\dfrac{\lcm(m_1,h_i)}{m_1}\right) - \prod\limits_{i=1}^3 \deg\left(\dfrac{\lcm(m_2,h_i)}{m_2}\right)\\
 & - 
 \prod\limits_{i=1}^3 \deg\left(\dfrac{\lcm(m_3,h_i)}{m_3}\right) + \prod\limits_{i=1}^3 \deg\left(\dfrac{\lcm(m_1,m_2,h_i)}{\lcm(m_1,m_2)}\right) + 
 \prod\limits_{i=1}^3 \deg\left(\dfrac{\lcm(m_1,m_3,h_i)}{\lcm(m_1,m_3)}\right)\\
 &+ \prod\limits_{i=1}^3 \deg\left(\dfrac{\lcm(m_2,m_3,h_i)}{\lcm(m_2,m_3)}\right)- \prod\limits_{i=1}^3 \deg\left(\dfrac{\lcm(m_1,m_2,m_3,h_i)}{\lcm(m_1,m_2,m_3)}\right)\\
 &=(2\times 3\times2)-(1\times2\times2)-(2\times2\times2)-(2\times2\times1)+(1\times1\times2)+(1\times2\times1)+\\
 &+(2\times1\times1)-(1\times1\times1)=1
 \end{align*}
 Finally, by Theorem \ref{Theorem 5.6}, $P_G(3)=3! \e(S/M_G)=3!=6.$\\
 
 (ii) Consider the graph $G$ shown below.
 \[\begin{tikzpicture}
\draw (0,0)--(0,2);
\draw (0,2)--(4,0)--(4,2)--(0,0);
 \node[circle, label=below left:{5}, fill=black, inner sep=1pt] at (4,0) {};
\node[circle, label=below:{3}, fill=black, inner sep=1pt] at (intersection of 0,0--4,2 and 0,2--4,0) {};
  \node[circle, label=below left:{1}, fill=black, inner sep=1pt] at (0,0) {};
   \node[circle, label=below left:{2}, fill=black, inner sep=1pt] at (0,2) {};
        \node[circle, label=below left:{4}, fill=black, inner sep=1pt] at (4,2) {};
\end{tikzpicture}\]
It is easy to see that $\chi(G)=3$, and not much harder to verify that, up to permutations, there are two possible $3$-colorings of $G$:
\[\begin{array}{lll}
f(1) = f(4)=0,&f(2)=f(5)=1,&f(3)=2\text{, and}\\
g(1)=g(5)=0,&g(2)=g(4)=1,&g(3)=2.
\end{array}\]
Therefore, $P_G(3)=3!\times 2=12$. We can confirm this result using Theorem \ref{Theorem 5.6}. Notice that $\Omega=\left\{\{1,4\},\{1,5\},\{2,4\},\{2,5\},\{3\}\right\}$. Denote by $l_i$ the monomial associated to the vertex $i$. Then 
\[\begin{array}{lll}
l_1=x_{\{1\}}x_{\{1,4\}}x_{\{1,5\}},&l_2=x_{\{2\}}x_{\{2,4\}}x_{\{2,5\}},&l_3=x_{\{3\}}\\
l_4=x_{\{1\}}x_{\{1,4\}}x_{\{2,4\}},&l_5=x_{\{5\}}x_{\{1,5\}}x_{\{2,5\}}.&
\end{array}\]
Define $m_1=l_4$, $m_2=l_5$ and $h_1=l_1$, $h_2=l_2$, $h_3=l_3$. Then $M_G$ can be expressed in the form:\\
 $
M_G=(m_1 = x_{\{1\}}x_{\{1,4\}}x_{\{2,4\}}, \quad m_2 = x_{\{5\}}x_{\{1,5\}}x_{\{2,5\}}, \quad h_1 =x_{\{1\}}x_{\{1,4\}}x_{\{1,5\}},$\\
 $  h_2=x_{\{2\}}x_{\{2,4,\}}x_{\{2,5\}},\quad h_3= x_{\{3\}}x_{\{3,6\}})$.\\
 By Theorem \ref{MonomialMult},
  \begin{align*}
 \e(S/M_G) &= \prod\limits_{i=1}^3 \deg(h_i) - \prod\limits_{i=1}^3 \deg\left(\dfrac{\lcm(m_1,h_i)}{m_1}\right) - \prod\limits_{i=1}^3 \deg\left(\dfrac{\lcm(m_2,h_i)}{m_2}\right)\\
 & 
+  \prod\limits_{i=1}^3 \deg\left(\dfrac{\lcm(m_1,m_2,h_i)}{\lcm(m_1,m_2)}\right)\\
 &=(3\times 3\times1)-(2\times2\times1)-(2\times2\times1)+(1\times1\times1)=2.
 \end{align*}
 Finally, by Theorem \ref{MonomialMult}, $P_G(3)=3!\e(S/M_G)=3!2=12$.
\end{example}

\section{Final remarks}
 There are some problems on graph theory (such as the Erdős-Faber-Lovász conjecture), where computing chromatic numbers is difficult. In such cases, it is of interest to find upper bounds to these invariants (for upper bounds on the Erdős-Faber-Lovász conjecture, see [CL, Ka]). Given that monomial ideals are bounded above by their projective dimension, Theorem \ref{Theorem 3} provides upper bounds to chromatic numbers, namely, $\chi(G) = \codim(S/M_G) \leq \pd(S/M_G)$.
 
 Unfortunately, this upper bound is not sharp. Since $M_G$ is dominant (Proposition \ref{chrDOM}), $\pd(S/M_G)$ is equal to the number of minimal generators of $M_G$ [Al], which equals the number of vertices of $G$ (Definition \ref{DefChrom}). In other words, the inequality $\chi(G) \leq \pd(S/M_G)$ only states that the chromatic number is at most as large as the number of vertices, which is trivial.
 
 We close this article by proposing a line of investigation that may improve the upper bound given by $\pd(S/M_G)$. With the notation of Definition \ref{DefChrom}, suppose that we define the ideal $M'_G=(m'_1, \ldots, m'_n)$, where $m'_i = \prod\limits_{\substack{\omega\in \Omega\\ i\in \omega}} x_{\omega}$. Then, unless $G$ is an edgeless graph, $M'_G$ will not be dominant, and its projective dimension will be strictly less than the number of vertices of $G$. Is there a class of graphs $G$ for which $\pd(S/M'_G)$ is a sharp upper bound? When is $M'_G$ Cohen-Macaulay?

 \bigskip

\noindent \textbf{Acknowledgements}: After living in the U.S. for many years, my family and I had to return to our home country Argentina to comply with visa requirements. In the midst of much adversity, my parents in law prepared an old quiet farm for us to live, and my parents supported us financially. My dear wife Danisa, who has remained my closest friend through the years, typed this article, and our five children helped by doing their chores and schoolwork without complaint. This work was not supported by any grants, but it received great support from my loved ones, which I gratefully acknowledge.

\end{document}